\numberwithin{equation}{section}
\theoremstyle{plain}
\newtheorem{thm}{Theorem}[section]
 \newtheorem{lemma}[thm]{Lemma}
\newtheorem{prop}[thm]{Proposition}
\theoremstyle{definition}
\newcommand{\dlabel}[1]{\ifmmode \text{\ttfamily \upshape [#1] } \else
{\ttfamily \upshape [#1] }\fi \label{#1}}
\newcommand{\Hom}{\operatorname{Hom} }
\begin{document}

\setlength{\baselineskip}{15pt}

\title{Classification of $p$-groups by their Schur Multiplier}

\author{Sumana Hatui}
\address{School of Mathematics, Harish-Chandra Research Institute, Chhatnag Road, Jhunsi, Allahabad 211019, INDIA}
\email{sumanahatui@hri.res.in, sumana.iitg@gmail.com}

\subjclass[2010]{20D15, 20E34}
\keywords{Schur Multiplier, Finite $p$-groups}

\begin{abstract}
Let $G$ be a non-abelian $p$-group of order $p^n$ and $M(G)$ be its Schur multiplier. It is well known result by Green that $|M(G)| \leq p^{\frac{1}{2}n(n-1)}$. So $|M(G)|= p^{\frac{1}{2}n(n-1)-t(G)}$ for some $t(G) \geq 0$. The groups has already been classified for $t(G) \leq 5$ by several authors. For $t(G)=6$ the classification has been done in \cite{SHJ}. In this paper we classify $p$-groups $G$ for $t(G) = 6$ in different method.
\end{abstract}

\maketitle

\section{Introduction}
The Schur multiplier $M(G)$ of a group $G$ was introduced by Schur \cite{IS1} in 1904 on the study of projective representation of groups. 

For $p$-groups $G$ of order $p^n$, Green \cite{JG} gave an upper bound $p^{\frac{1}{2}n(n-1)}$ for order of the Schur Multiplier $M(G)$. So we have $|M(G)|=p^{\frac{1}{2}n(n-1)-t(G)}$, for some $t(G) \geq 0$. Now the question comes in our mind that whether it is possible to classify the structure of all $p$-groups $G$ by the order of the Schur multiplier $M(G)$, i.e., when $t(G)$ is known. Several authors have already answered this question. They classified the groups of order $p^n$ for $t(G) \leq 5$ in \cite{BY,ZH,EG,PN3,PN1}. The structure of $p$-groups with $t(G) = 6$ has been determined in \cite{SHJ}. In the present paper,  we classify
the structure of all non-abelian finite $p$-groups when $t(G) = 6$, i.e. $|M(G)|=p^{\frac{1}{2}n(n-1)-6}$. Our method is quite different to that of \cite{SHJ}. We have stated some structural results of group $G$ with the assumtion $t(G)=6$.

By $ES_p(p^3)$ and $ES_{p^2}(p^3)$ we denote extra-special $p$-groups of order $p^3$ having exponent $p$ and $p^2$ respectively. By $ES_p(p^5)$ and $ES_{p^2}(p^5)$ we denote extra-special $p$-groups of order $p^5$ having exponent $p$ and $p^2$ respectively. By $\mathbb{Z}_p^{(k)}$ we denote $\mathbb{Z}_p \times \mathbb{Z}_p \cdots \times \mathbb{Z}_p$($k$ times).

James \cite{RJ} classified all $p$-groups of order $p^n$ for $n \leq 6$ upto isoclinism which are denoted by $\Phi_k$. We use his notation throughout this paper.

In this paper we prove the following result. 
\begin{thm} (Main Theorem)
Let $G$ be a non-abelian $p$-group of order $p^n$ with $|M(G)|=p^{\frac{1}{2}n(n-1)-6}$.\\ 
If $p$ is odd, then $G$ is isomorphic to\\
(i) $ES_p(p^3) \times \mathbb{Z}_p^{(5)}$.\\
(ii) $\Phi_2(21^4)a = ES_{p^2}(p^3) \times \mathbb{Z}_p^{(3)}$.\\
(iii) $\Phi_2(21^4)b= \Phi_2(211)b \times \mathbb{Z}_p^{(2)}$, \\
where $\Phi_2(211)b = \langle{\alpha,\alpha_1,\alpha_2,\gamma \mid [\alpha_1,\alpha]=\gamma^p=\alpha_2, \alpha^p=\alpha_1^p=\alpha_2^p=1\rangle}$.\\
(iv) $\Phi_5(21^4)a = ES_{p^2}(p^5) \times \mathbb{Z}_p$.\\
(v) $\Phi_5(1^6)=ES_p(p^5) \times \mathbb{Z}_p$.\\
(vi) $\Phi_5(21^4)b=\langle{\alpha_1,\alpha_2,\alpha_3,\alpha_4,\beta,\gamma \mid [\alpha_1,\alpha_2]=[\alpha_3,\alpha_4]=\gamma^p=\beta,\alpha_i^p=\beta^p=1(i=1,2,3,4)\rangle}$.\\
(vii) $\Phi_4(1^6)=\Phi_4(1^5) \times \mathbb{Z}_p$, \\
where $\Phi_4\left(1^5\right)= \langle{\alpha,\alpha_1,\alpha_2,\beta_1,\beta_2 \mid [\alpha_i,\alpha]=\beta_i, \alpha^p=\alpha_i^p=\beta_i^p=1 (i=1,2)\rangle}$.\\
(viii) $\Phi_2(2111)c=\Phi_2(211)c \times \mathbb{Z}_p$,\\
where $\Phi_2(211)c = \langle{\alpha,\alpha_1,\alpha_2 \mid [\alpha_1,\alpha]=\alpha_2, \alpha^{p^2}=\alpha_1^p=\alpha_2^p=1\rangle}$.\\
(ix) $\Phi_2(2111)d=ES_p(p^3) \times \mathbb{Z}_{p^2}$.\\
(x) $\Phi_3(1^5)=\Phi_3(1^4) \times \mathbb{Z}_p$, \\
where $\Phi_3(1^4) = \langle{\alpha,\alpha_1,\alpha_2,\alpha_3 \mid [\alpha_i,\alpha]=\alpha_{i+1},\alpha^p=\alpha_i^{(p)}=\alpha_3^p=1(i=1,2)\rangle}$.\\
(xi) $\Phi_7(1^5)=\langle{\alpha,\alpha_1,\alpha_2,\alpha_3,\beta \mid [\alpha_i,\alpha]=\alpha_{i+1},[\alpha_1,\beta]=\alpha_3, \alpha^p=\alpha_1^{(p)}=\alpha_{i+1}^p=\beta^p=1 (i=1,2)\rangle}$.\\
(xii) $\Phi_2(31)=\langle{\alpha,\alpha_1,\alpha_2 \mid [\alpha_1,\alpha]=\alpha^{p^2}=\alpha_2, \alpha_1^p=\alpha_2^p=1\rangle}$.

If $p=2$, then $G$ is isomorphic to\\
(xiii) $D_8 \times \mathbb{Z}_2^{(4)}$, \\
(xiv) $\langle a,b,c,d,e \mid [d, c] = [e, b] = a^2, a^4=b^2=c^2=d^2=e^2=1\rangle$,\\
(xv) $\mathbb{Z}_2 \times \langle a,b,c,d \mid  [b,c]=[a,d]=b^2,a^2=b^4=c^2=d^2=1\rangle$,\\
(xvi) $\mathbb{Z}_2 \times \langle a,b,c,d \mid b^2=c^2, [b,c]=[a,d]=b^2,a^2=b^4=c^4=d^2=1\rangle$,\\ 
(xvii) $\mathbb{Z}_2 \times \mathbb{Z}_2 \times X $ where $X=\langle{a,b,c \mid [b,c]=a^2,[a,b]=[a,c]=1,a^4=b^2=c^2=1\rangle}$\\
(xviii) $Q_8 \times \mathbb{Z}_2^{(3)}$, \\
(xix) $\mathbb{Z}_2^{(4)} \rtimes \mathbb{Z}_2$, \\
(xx) $\langle a, b, c \mid  [a, c] = [b, c] = 1, (ba)^2 = (ab)^2, ba^2 = a^2b, a^4 = b^2 = c^2 = 1 \rangle$,\\
(xxi) $QD_{16}$, \\
(xxii) $Q_{16}$, \\
(xxiii) $\langle{a,b \mid [a,b]=a^4,a^8=b^2=1\rangle}$,\\ 
(xxiv) $\langle a,b,c \mid [a, c] = a^2, [b, c] = b^2,a^4 = b^4=c^2 = 1 \rangle$,\\
where $D_n$ denotes Dihedral group of order $n$, $QD_n$ denotes QuasiDihedral groups of order n and $Q_n$ denotes Quaternion  group of order $n$.  
\end{thm}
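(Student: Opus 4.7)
The plan is to reduce the classification problem to a finite enumeration on the James list. Two observations drive this: first, that $t(G)=6$ forces $|G'|$, $|G/Z(G)|$ and the minimal number of generators $d(G)$ to be severely bounded, so only finitely many isoclinism types can possibly occur; and second, that Schur multipliers of direct products decompose as $M(H\times A)\cong M(H)\oplus M(A)\oplus (H^{ab}\otimes A)$, so the whole analysis is controlled by the purely non-abelian ``core'' $H$ of $G$.

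First I would establish the basic inequalities. Combining Green's bound with Niroomand's inequality and refinements of Ellis--Wiegold type that bound $|M(G)|$ in terms of $|G/Z(G)|$, $|G'|$ and the rank of $G^{ab}$, one shows that $t(G)=6$ forces $|G'|\leq p^3$ together with a tight restriction on the nilpotency class. This already constrains $G$, up to an abelian direct factor, to appear on the James list of $p$-groups of order $p^n$ with $n\leq 6$, in the notation $\Phi_k(\cdots)$. Next I would extract the purely non-abelian part: writing $G=H\times A$ with $A$ the maximal abelian direct factor of $G$ and $H$ purely non-abelian, the direct-product formula allows one to re-express $t(G)$ as a function of $t(H)$, the rank $d(H^{ab})$ and the invariants of $A$. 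A short bookkeeping argument then shows that $t(G)=6$ can be realised only for a very small list of pairs $(H,A)$: for odd $p$ the cores $H$ that survive are exactly those appearing implicitly in items (i)--(xii) of the statement, and similarly for the dihedral--quaternion--quasidihedral cores in items (xiii)--(xxiv) when $p=2$.

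For each surviving candidate I would compute $|M(G)|$ explicitly, either via Hopf's formula $M(G)\cong (R\cap [F,F])/[F,R]$ applied to the presentation of $\Phi_k(\cdots)$, or by combining the direct-product decomposition with known values such as $|M(ES_p(p^3))|=p^2$, $|M(ES_{p^2}(p^3))|=1$, and the standard tables for $|M(ES_p(p^5))|$, $|M(ES_{p^2}(p^5))|$, $|M(\Phi_k(1^5))|$ and the relevant order-$p^6$ groups. Matching the resulting value of $t(G)$ to $6$ recovers precisely the list in the statement and rules out all other candidates. For the abelian direct factor I would use $|M(\mathbb{Z}_p^{(k)})|=p^{\binom{k}{2}}$ and the corresponding expression for mixed abelian parts, together with the tensor-product term $H^{ab}\otimes A$, to pin down the allowed shape of $A$ in each case.

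The main obstacle will be ensuring completeness, particularly for $p=2$: the small $2$-groups behave differently from their odd-$p$ analogues, and groups such as $QD_{16}$, $Q_{16}$ and the other items (xiii)--(xxiv) do not fit uniformly into the James notation, so each must be handled by an ad hoc multiplier computation together with a case-by-case exclusion of near-misses. A secondary difficulty is excluding purely non-abelian cores of order larger than $p^6$: this is carried out by combining the general bounds on $|M(G)|$ with the observation that enlarging the non-abelian part of $G$ strictly increases $t(G)$ past $6$, so no such core can contribute to the list.
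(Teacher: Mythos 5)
Your organizing principle --- split off the maximal abelian direct factor $A$, use $M(H\times A)\cong M(H)\times M(A)\times(H^{ab}\otimes A)$, and rewrite $t(G)=t(H)+(m-d(H))(n-m)$ where $|H|=p^m$ --- is genuinely different from the paper's, which never isolates a ``core'' but instead fixes the order $p^n$ ($4\le n\le 8$ via Niroomand), fixes $|G'|$, and walks through James's isoclinism classes $\Phi_k$ one at a time using Jones's inequalities (Theorems \ref{J} and \ref{J1}) to kill almost every candidate. Your reduction is attractive for the cases $A\neq 1$: since $m-d(H)\ge 1$ for non-abelian $H$, a nontrivial abelian factor forces $t(H)\le 5$, so those cases could in principle be read off from the existing classifications for $t\le 5$ rather than recomputed. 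The paper gets the same groups by noticing, inside each $\Phi_k$, which members are direct products, so the two routes overlap heavily there.

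The genuine gap is in the cases your reduction does not touch: the purely non-abelian candidates with $t(H)=6$, which is exactly where the paper's real work lies. ``Apply Hopf's formula to the presentation'' is not a workable plan here: for $\Phi_4(1^5)$ the paper needs the full Blackburn--Evens machinery (Theorem \ref{B}) for class-$2$ groups with elementary abelian central quotient; for $\Phi_7(1^5)$ it needs the inflation--transgression exact sequence \emph{plus} a capability argument ($\Phi_7(1^5)\cong E/Z(E)$ for $E$ in $\Phi_{30}$) to show the transgression-image computation gives a lower bound $p^4\le|M(G)|$, matched against an upper bound from Jones; for $\Phi_5(21^4)b$ it needs an upper bound from Theorem \ref{J} matched against a lower bound from \cite[Corollary 3.2]{MRRR}. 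You also need to \emph{exclude} every other purely non-abelian group of order $p^5$ and $p^6$ on James's list (e.g.\ $\Phi_6$, $\Phi_8(32)$, $\Phi_9$, $\Phi_{10}$, $\Phi_{14}$, $\Phi_{22}(1^6)$, the non-split members of $\Phi_3$ and $\Phi_7$), and your proposal gives no mechanism for this beyond ``compute $|M(G)|$ explicitly.'' Finally, your exclusion of cores of order $>p^6$ is justified by the claim that ``enlarging the non-abelian part strictly increases $t(G)$,'' but a purely non-abelian group of order $p^7$ or $p^8$ is not obtained by enlarging anything; you need the quantitative bounds of Niroomand (\cite{PN}, \cite[Theorem 21]{PN2}) that the paper invokes in Lemmas \ref{5}--\ref{7}, which give $n\le 8$ and settle $n=7,8$ outright.
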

\section{Preliminaries}
In this section we list following results which are used in our proof.
\begin{thm}(see \cite[Theorem 4.1]{MRRR})\label{J}
Let $G$ be a finite group and $K$ a central subgroup. Set $A = G/K$. Then
$|M(G)||G'\cap K|$ divides $|M(A)| |M(K)| |A \otimes K|$
\end{thm}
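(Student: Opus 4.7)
The plan is to deduce the divisibility directly from the Lyndon--Hochschild--Serre spectral sequence of the central extension
\[ 1 \to K \to G \to A \to 1 \]
in integral homology. Because $K$ is central in $G$, the induced action of $A = G/K$ on $H_q(K;\mathbb{Z})$ is trivial, so the $E^2$ page takes the form $E^2_{p,q} = H_p(A;H_q(K))$ with trivial coefficients, and the sequence converges to $H_{p+q}(G;\mathbb{Z})$.

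I would first identify the three entries on the antidiagonal $p+q=2$: one has $E^2_{2,0} = M(A)$, $E^2_{0,2} = M(K)$, and, via the universal coefficient theorem applied to the trivial $A$-module $K$, $E^2_{1,1} = H_1(A;K) = A^{ab} \otimes K$. The filtration of $M(G) = H_2(G)$ coming from the spectral sequence then yields the multiplicative identity
\[ |M(G)| \;=\; |E^{\infty}_{0,2}|\cdot|E^{\infty}_{1,1}|\cdot|E^{\infty}_{2,0}|, \]
and since each $E^{\infty}_{p,q}$ is a subquotient of $E^2_{p,q}$, the orders $|E^{\infty}_{0,2}|$ and $|E^{\infty}_{1,1}|$ divide $|M(K)|$ and $|A \otimes K|$ respectively (using that $A \otimes K$ is interpreted as $A^{ab} \otimes K$ since $K$ is abelian).

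The heart of the argument is pinpointing $|E^{\infty}_{2,0}|$. The only differential that affects the $(2,0)$-slot is $d_2 : E^2_{2,0} \to E^2_{0,1}$, i.e.\ $d_2 : M(A) \to K$, and the classical Stallings--Stammbach five-term exact sequence
\[ M(G) \to M(A) \xrightarrow{d_2} K \to G^{ab} \to A^{ab} \to 0 \]
forces $\mathrm{im}(d_2) = G' \cap K$, since this is the kernel of $K \to G^{ab}$. Hence $|E^{\infty}_{2,0}| = |\ker d_2| = |M(A)|/|G' \cap K|$, and combining the pieces,
\[ |M(G)|\cdot|G' \cap K| \;=\; |E^{\infty}_{0,2}|\cdot|E^{\infty}_{1,1}|\cdot|M(A)|, \]
which divides $|M(K)|\cdot|A \otimes K|\cdot|M(A)|$, as claimed. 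The one subtle point is the identification $\mathrm{im}(d_2) = G' \cap K$, which rests on recognizing $d_2$ on the $(2,0)$-slot as the transgression and matching it with the five-term sequence; once this is in hand, the remainder is routine bookkeeping on the spectral sequence.
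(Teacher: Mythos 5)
The paper gives no proof of this statement, merely citing Jones's Theorem 4.1, and your spectral-sequence argument is correct and is essentially the standard proof of that result: the three $E^{\infty}$-terms on the antidiagonal $p+q=2$ of the Lyndon--Hochschild--Serre spectral sequence (with trivial action since $K$ is central) give $|M(G)|=|E^{\infty}_{0,2}||E^{\infty}_{1,1}||E^{\infty}_{2,0}|$, the first two factors divide $|M(K)|$ and $|A\otimes K|$ as subquotients, and the five-term sequence correctly identifies $\mathrm{im}(d_2)=\ker(K\to G^{\mathrm{ab}})=G'\cap K$, so $|E^{\infty}_{2,0}|=|M(A)|/|G'\cap K|$. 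I see no gaps; all groups involved are finite, so the divisibility bookkeeping is valid.
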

\begin{prop}(see \cite[Proposition 2.4]{MRR})\label{J1}
Let $G$ be a finite nilpotent group of class $c \geq 2$. Then
$|\gamma_c(G)||M(G)| \leq |M(G/\gamma_c(G))||G/Z_{c-1}(G) \otimes \gamma_c(G)|$
\end{prop}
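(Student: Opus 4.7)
The plan is to derive the inequality from the Ganea six-term exact sequence associated to the central extension $1 \to \gamma_c(G) \to G \to G/\gamma_c(G) \to 1$, and then to upgrade the naive bound on the Ganea image by showing that the Ganea homomorphism factors through $(G/Z_{c-1}(G)) \otimes \gamma_c(G)$.

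First I set $N = \gamma_c(G)$ and $Q = G/N$. Because $G$ has class $c$, $N$ is central in $G$, so the extension is central and one has the Ganea exact sequence
\[
G^{\mathrm{ab}} \otimes N \;\xrightarrow{\;\mathrm{gan}\;}\; M(G) \;\to\; M(Q) \;\to\; N \;\to\; G^{\mathrm{ab}} \;\to\; Q^{\mathrm{ab}} \;\to\; 0.
\]
Because $c \geq 2$, $N \subseteq G'$, so the map $N \to G^{\mathrm{ab}}$ is zero, and therefore $M(Q) \to N$ is surjective. Counting via exactness at $M(G)$ and $M(Q)$ gives $|M(G)| = |\mathrm{im}(\mathrm{gan})| \cdot |M(Q)|/|N|$, equivalently $|N|\,|M(G)| = |M(Q)| \cdot |\mathrm{im}(\mathrm{gan})|$.

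The key step is to prove $|\mathrm{im}(\mathrm{gan})| \leq |(G/Z_{c-1}(G)) \otimes N|$. Since $G/Z_{c-1}(G)$ is abelian, $G' \subseteq Z_{c-1}(G)$, so the canonical map $G^{\mathrm{ab}} \twoheadrightarrow G/Z_{c-1}(G)$ has kernel $Z_{c-1}(G)/G'$; I want to show that $\mathrm{gan}$ kills this kernel tensored with $N$. Fix a Schur cover $1 \to M(G) \to G^{*} \to G \to 1$. The Ganea map is realized by $\bar{g} \otimes n \mapsto [g^{*}, n^{*}]$ for any lifts $g^{*}, n^{*}$; this value lies in $M(G) \subseteq Z(G^{*})$ because $n$ is central in $G$, and bilinearity together with independence from the lifts follows from the centrality of $M(G)$ in $G^{*}$. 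Two auxiliary observations finish the argument. First, by induction on $k$, any lift of an element of $Z_k(G)$ lies in $Z_{k+1}(G^{*})$: the base case $k=0$ is $M(G) \subseteq Z(G^{*})$, and the inductive step uses that $[z^{*}, g^{*}]$ is a lift of $[z, g] \in Z_{k-1}(G)$. Applying this at $k = c-1$ shows that any lift $z^{*}$ of $z \in Z_{c-1}(G)$ lies in $Z_c(G^{*})$. Second, since $\gamma_c(G^{*})$ surjects onto $\gamma_c(G) = N$, every lift $n^{*}$ of $n \in N$ has the form $n^{*} = \gamma\,m$ with $\gamma \in \gamma_c(G^{*})$ and $m \in M(G) \subseteq Z(G^{*})$. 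Combining these, $[z^{*}, n^{*}] = [z^{*}, \gamma] \in [Z_c(G^{*}), \gamma_c(G^{*})] \subseteq Z_0(G^{*}) = 1$, by the standard identity $[Z_i(H), \gamma_j(H)] \subseteq Z_{\max(0,\,i-j)}(H)$ applied at $i = j = c$. Hence $\mathrm{gan}$ descends to a map $(G/Z_{c-1}(G)) \otimes N \to M(G)$, yielding the claimed bound on $|\mathrm{im}(\mathrm{gan})|$.

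Substituting back into $|N|\,|M(G)| = |M(Q)| \cdot |\mathrm{im}(\mathrm{gan})|$ produces $|\gamma_c(G)|\,|M(G)| \leq |M(G/\gamma_c(G))| \cdot |(G/Z_{c-1}(G)) \otimes \gamma_c(G)|$. The hard part is the factorization: the Ganea sequence alone only delivers the weaker bound with $G^{\mathrm{ab}} \otimes N$, and sharpening it to $(G/Z_{c-1}(G)) \otimes N$ requires one to commit to a specific Schur cover, to track how $Z_k(G)$ and $\gamma_c(G)$ lift to $G^{*}$, and to invoke the center-versus-lower-central-series identity exactly at the borderline index $i = j = c$. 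Once this lifting analysis is in place, the remainder is numerical bookkeeping in the Ganea exact sequence.
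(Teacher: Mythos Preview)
The paper does not supply a proof of this proposition; it is quoted verbatim from Jones \cite[Proposition~2.4]{MRR} and used as a black box. So there is no in-paper argument to compare against.

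Your proof is correct. The Ganea (Stallings--Stammbach) exact sequence for the central extension $1\to\gamma_c(G)\to G\to G/\gamma_c(G)\to 1$, together with $\gamma_c(G)\subseteq G'$, indeed yields the equality $|\gamma_c(G)|\,|M(G)|=|M(G/\gamma_c(G))|\cdot|\mathrm{im}(\mathrm{gan})|$. Your lifting argument in a Schur cover is sound: the induction showing that preimages of $Z_k(G)$ lie in $Z_{k+1}(G^{*})$ is standard, the surjection $\gamma_c(G^{*})\twoheadrightarrow\gamma_c(G)$ lets you choose lifts of $n\in\gamma_c(G)$ inside $\gamma_c(G^{*})$ up to a central factor, and then $[Z_c(G^{*}),\gamma_c(G^{*})]=1$ kills the relevant commutators. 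This forces $\mathrm{gan}$ to factor through $(G/Z_{c-1}(G))\otimes\gamma_c(G)$ and gives the bound.

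For context, Jones's original proof proceeds via a free presentation $G=F/R$ and Hopf's formula rather than via an abstract Schur cover, but the substance is identical: in that language one shows $[S,T]\subseteq[F,R]$ where $S/R=\gamma_c(G)$ and $T/R=Z_{c-1}(G)$, which is exactly your commutator vanishing translated through $G^{*}=F/[F,R]$. So your approach is a faithful repackaging of the classical argument.
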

\begin{thm}(see \cite[Corollary 4.16]{BT})\label{SHH}
Let $G$ be an extra special p-group of order $p^{2n+1}$.\\
(i) If $n \geq 2$, then $|M(G)| = p^{2n^2-n-1}.$\\
(ii) If $n = 1$, then the Schur multiplier of $D_8 , Q_8 , ES_p(p^3)$ and $ES_{p^2}(p^3)$ are isomorphic to $\mathbb{Z}_2, 1, \mathbb{Z}_p \times \mathbb{Z}_p$ and $1$ respectively.
\end{thm}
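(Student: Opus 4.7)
The strategy is to compute $M(G)$ via Hopf's formula $M(G) \cong (R \cap [F,F]) / [F,R]$, splitting the argument according to whether $n = 1$ or $n \geq 2$.

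For part (ii), each of the four groups of order $p^3$ admits a minimal presentation on two generators, and one analyzes $(R \cap [F,F]) / [F,R]$ by hand. For $D_8$ and $ES_p(p^3)$, the non-trivial multiplier is witnessed by exhibiting a stem cover of $G$ of order $|G| \cdot |M(G)|$ (a group of order $16$ extending $D_8$, and a Heisenberg-type group of order $p^5$ extending $ES_p(p^3)$). For $Q_8$ and $ES_{p^2}(p^3)$, the presence of a higher-order relation (the relation $b^2 = a^2$ in $Q_8$, and $a^{p^2} = 1$ coupled with the nontrivial action in $ES_{p^2}(p^3)$) forces $R \cap [F,F] \subseteq [F,R]$, making the multiplier trivial.

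For part (i), Proposition \ref{J1} with $c = 2$ gives a preliminary upper bound: since $G' = Z(G) \cong \mathbb{Z}_p$, $G/G' \cong \mathbb{Z}_p^{2n}$, and $|M(\mathbb{Z}_p^{2n})| = p^{n(2n-1)}$, we obtain
\[
p \cdot |M(G)| \leq p^{n(2n-1)} \cdot p^{2n} = p^{2n^2 + n},
\]
so $|M(G)| \leq p^{2n^2 + n - 1}$, a factor $p^{2n}$ larger than the target. To tighten this to $p^{2n^2 - n - 1}$, I would take a minimal presentation $F/R \cong G$ of rank $d(G) = 2n$ (since $\Phi(G) = Z(G)$) and compute $(R \cap [F,F])/[F,R]$ directly. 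The commutator relators identifying $[x_{2i-1}, x_{2i}]$ with $z = [x_1, x_2]$ for $i \geq 2$ and killing $[x_i, x_j]$ for non-symplectic index-pairs, together with the $p$-power relators $x_i^p$ (equal to $1$ or $z$ depending on the type), collectively produce $2n$ extra relations beyond what Proposition \ref{J1} accounts for. Counting the surviving classes in $\wedge^2 F^{ab} \otimes \mathbb{F}_p$ modulo these images then yields exactly $p^{2n^2 - n - 1}$.

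The main obstacle is the sharp count in part (i): Proposition \ref{J1} overshoots by $p^{2n}$, and closing this gap requires careful bookkeeping of how the exponent relators $x_i^p$ sit inside $[F,R]$ modulo $R \cap [F,F]$. In particular, one must verify that the final count $p^{2n^2 - n - 1}$ is independent of the isomorphism type of $G$ (exponent $p$ vs.\ exponent $p^2$ for $p$ odd; the two central-product types built from $D_8$ and $Q_8$ for $p = 2$), so that the same formula holds uniformly for both extraspecial families of a given order $p^{2n+1}$.
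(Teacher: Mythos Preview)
The paper does not prove this theorem at all: it is listed in the Preliminaries with an explicit citation to \cite[Corollary~4.16]{BT} and is used as a black box thereafter. So there is no ``paper's own proof'' to compare against; you are attempting to supply what the paper simply quotes.

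As for your sketch itself, part~(ii) is fine in outline. For part~(i), you correctly observe that Proposition~\ref{J1} only gives $|M(G)|\le p^{2n^2+n-1}$, overshooting by $p^{2n}$, and you openly flag the closing of this gap as ``the main obstacle'' without actually carrying it out. That is a genuine gap: the promised bookkeeping in $(R\cap[F,F])/[F,R]$ is where all the work lies, and nothing you have written pins down the exact order (in particular you give no matching lower bound). A cleaner route, already available in the paper's own toolkit, is the Blackburn--Evens machinery summarized before Theorem~\ref{B}: an extraspecial group has class~$2$ with $G/G'$ elementary abelian, so with $V=G/G'\cong\mathbb{Z}_p^{2n}$ and $W=G'\cong\mathbb{Z}_p$ one gets $|M(G)|=|N|\cdot|\ker\rho|$ where $|N|=|V\otimes W|/|X|$ and $|\ker\rho|=|V\wedge V|/|W|=p^{\binom{2n}{2}-1}$. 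The subspace $X_1\subseteq V\otimes W\cong V$ is spanned by the Jacobi-type elements, and computing $\dim X$ for the standard symplectic form on $V$ gives $\dim X=2n$ (for $p$ odd; a small modification handles $p=2$), hence $|N|=p^{0}$ is wrong...

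Actually let me retract that last computation and simply say: the Blackburn--Evens setup reduces the question to a linear-algebra count of $\dim X$ inside $V\otimes W$, which is both sharper and more uniform across the two isomorphism types than a raw Hopf-formula chase. Either way, your proposal as written stops short of the actual computation that determines $|M(G)|$ exactly, so it is a strategy rather than a proof.
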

The following result follows from \cite{KO} for $|G'|=p$ and from \cite[page. 4177]{EG} for $|G'|=p^2$. 
\begin{thm}\label{SHHH}
For non-abelian $p$-groups $G$ of order $p^4$ with $|G'|=p$, 
$M(\Phi_2(211)a) \cong \mathbb{Z}_p \times \mathbb{Z}_p$, $M(\Phi_2(1^4)) \cong \mathbb{Z}_p^{(4)}$, $M(\phi_2(31)) \cong {1}$, $M(\Phi_2(22)) \cong \mathbb{Z}_p$, $M(\Phi_2(211)b) \cong \mathbb{Z}_p \times \mathbb{Z}_p$, $M(\Phi_2(211)c) \cong \mathbb{Z}_p \times \mathbb{Z}_p$ \\
and For $|G'|=p^2$,\\
$M(\Phi_3(211)a) \cong \mathbb{Z}_p$, $M(\Phi_3(211)b_r) \cong \mathbb{Z}_p$, $M(\Phi_3(1^4)) \cong \mathbb{Z}_p \times \mathbb{Z}_p$
\end{thm}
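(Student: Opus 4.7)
The plan is to compute $M(G)$ separately for each James group $\Phi_k(\lambda)$ in the list, using three tools: Schur's direct product formula, Theorem \ref{J} (or Proposition \ref{J1} in the class-$3$ cases) for an upper bound, and Theorem \ref{SHH} as the input on extra-special quotients.

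First I would dispose of the split cases. Several of the listed groups decompose as $G \cong H \times \mathbb{Z}_p$ for some $H$ of order $p^3$ whose multiplier is already known. For these, Schur's formula
\[ M(A \times B) \cong M(A) \oplus M(B) \oplus (A/A' \otimes B/B') \]
combined with Theorem \ref{SHH}, which gives $M(ES_p(p^3)) \cong \mathbb{Z}_p \times \mathbb{Z}_p$ and $M(ES_{p^2}(p^3)) = 1$, determines $M(G)$ directly. For the groups that are not direct products, I would set $K$ to be a central subgroup of order $p$ contained in $G'$ and apply Theorem \ref{J}. Since $G/K$ is then either elementary abelian of order $p^3$ or extra-special of order $p^3$, the multiplier $|M(G/K)|$ is given by Theorem \ref{SHH}, and the divisibility stated in Theorem \ref{J} yields an upper bound on $|M(G)|$ matching the value claimed.

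A matching lower bound would come from Hopf's formula $M(G) \cong (R \cap [F,F])/[F,R]$ applied to the standard James presentation $G = F/R$; the relations lying in $[F,F]$ that remain nontrivial modulo $[F,R]$ generate a subgroup realising the claimed multiplier. The main obstacle lies in the $|G'|=p^2$ cases, especially the parametrised family $\Phi_3(211)b_r$: a bare application of Theorem \ref{J} is not tight there, so I would switch to Proposition \ref{J1} with $c = 3$ and $|\gamma_3(G)| = p$, compute $G/Z_2(G) \otimes \gamma_3(G)$ uniformly in $r$, and verify that the resulting bound collapses to $|M(G)| = p$ regardless of the parameter. Checking that the Hopf-formula description of $M(G)$ is the same cyclic $\mathbb{Z}_p$ across the family $\Phi_3(211)b_r$ is the delicate step.
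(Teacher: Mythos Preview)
The paper does not prove Theorem~\ref{SHHH} at all: the sentence immediately preceding it attributes the $|G'|=p$ cases to \cite{KO} and the $|G'|=p^2$ cases to \cite[p.~4177]{EG}, and the result is then simply quoted. Your proposal therefore goes well beyond what the paper does --- you are sketching an independent derivation of facts the paper imports from the literature.

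Your outline is nonetheless a sound way to recover these multipliers. The direct-product cases ($\Phi_2(1^4)$ and $\Phi_2(211)a$) go through exactly as you describe via Schur's formula together with Theorem~\ref{SHH}. One small slip: for the non-split $\Phi_2$ groups, taking $K=G'$ makes $G/K$ the abelianisation, which is abelian of order $p^3$ but not always elementary abelian (e.g.\ $\Phi_2(31)$ has $G/G'\cong\mathbb{Z}_{p^3}$ and $\Phi_2(22)$ has $G/G'\cong\mathbb{Z}_{p^2}\times\mathbb{Z}_p$); you would use the known multiplier of an abelian $p$-group there rather than Theorem~\ref{SHH}, but the bound from Theorem~\ref{J} still closes. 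For the class-$3$ groups in $\Phi_3$, your plan to combine Proposition~\ref{J1} for the upper bound with Hopf's formula for the lower bound is the natural route, and you are right that verifying uniformity across the parametrised family $\Phi_3(211)b_r$ is where the work lies. None of this is in the paper, which is content to cite the answer.
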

The following three lemmas follow from \cite[Main theorem]{PN}.
\begin{lemma}\label{5}
Let $G$ be a non-abelian $p$-group of order $p^n$ with $|M(G)|=p^{\frac{1}{2}n(n-1)-6}$. Then $n \leq 8$.
\end{lemma}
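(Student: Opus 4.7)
The plan is to apply Niroomand's improved upper bound for the Schur multiplier of a non-abelian $p$-group, which is precisely the main theorem of \cite{PN}. That result states that for every non-abelian finite $p$-group $G$ of order $p^n$ one has
\[
|M(G)| \leq p^{\frac{1}{2}(n-1)(n-2)+1}.
\]
This sharpens Green's classical bound $p^{\frac{1}{2}n(n-1)}$ by a factor of $p^{n-2}$ in the non-abelian case, and the text already signals that this is the tool to use.

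Granted this bound, the lemma reduces to a one-line calculation. I substitute the hypothesis $|M(G)| = p^{\frac{1}{2}n(n-1)-6}$ into Niroomand's inequality and compare exponents, obtaining
\[
\frac{1}{2}n(n-1) - 6 \ \leq\ \frac{1}{2}(n-1)(n-2) + 1.
\]
Rearranging yields $\frac{1}{2}(n-1)\bigl[n - (n-2)\bigr] \leq 7$, i.e.\ $n-1 \leq 7$, and hence $n \leq 8$.

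There is no serious obstacle here: the whole content is packaged in Niroomand's bound, and the only choice is which upper bound to invoke. Niroomand's is the sharpest that still applies to an arbitrary non-abelian $p$-group, so it is the natural weapon. One could in principle attempt the same sort of bound by iterating Theorem \ref{J} down a central series, but that approach accumulates an additive loss at each step and would give a strictly weaker conclusion than a single appeal to \cite{PN}.
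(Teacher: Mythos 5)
Your proposal is correct and matches the paper's proof: the paper simply states that this lemma follows from the main theorem of \cite{PN}, which is exactly the bound $|M(G)| \leq p^{\frac{1}{2}(n-1)(n-2)+1}$ you invoke, and your arithmetic deducing $n \leq 8$ from it is right. You have merely written out the short calculation the paper leaves implicit.
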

\begin{lemma}\label{6}
Let $G$ be a non-abelian $p$-group of order $p^8$ with $|M(G)|=p^{\frac{1}{2}n(n-1)-6}$. Then $G \cong ES_p(p^3) \times \mathbb{Z}_p^{(5)}$.
\end{lemma}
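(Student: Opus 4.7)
The plan is to recognize this lemma as the equality case of Niroomand's main theorem \cite{PN}, which also underpins Lemma \ref{5}. That theorem states that for every non-abelian $p$-group $G$ of order $p^n$,
\[
|M(G)| \;\leq\; p^{\frac{1}{2}(n-1)(n-2)+1},
\]
and moreover equality forces $G \cong ES_p(p^3) \times \mathbb{Z}_p^{(n-3)}$.

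The first step is a short arithmetic comparison. Since
\[
\tfrac{1}{2}n(n-1) - \tfrac{1}{2}(n-1)(n-2) - 1 \;=\; (n-1)-1 \;=\; n-2,
\]
we have $p^{\frac{1}{2}n(n-1)-6} = p^{\frac{1}{2}(n-1)(n-2)+1}$ exactly when $n=8$. Thus the hypothesis $|M(G)| = p^{\frac{1}{2}n(n-1)-6}$ with $|G|=p^{8}$ places $G$ precisely on the Niroomand bound, and the equality clause of the theorem immediately yields $G \cong ES_p(p^3) \times \mathbb{Z}_p^{(5)}$, as claimed. This is exactly the same reference that gives Lemma \ref{5}: the inequality $n\leq 8$ and the identification of $G$ at $n=8$ are the two halves of one statement.

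The main ``obstacle'' here is therefore only bookkeeping: one should verify that the extremal classification is indeed the content of \cite[Main theorem]{PN} and is being cited in the exact form used above. Should one prefer a self-contained argument from the results already collected in the Preliminaries, the alternative is to iterate Theorem \ref{J} with $K$ a central subgroup of order $p$, chosen first inside $Z(G)\cap G'$ and then outside $G'$, and to track exactly when the divisibility $|M(G)||G'\cap K|\bigm|\,|M(G/K)|\,|M(K)|\,|G/K \otimes K|$ is tight. Equality throughout forces, in turn, $|G'|=p$, nilpotency class $2$, $\exp(G/Z(G))=p$, and finally a central-product decomposition $G = ES_p(p^3)\cdot Z(G)$ with $Z(G)$ elementary abelian of rank $6$, yielding the stated isomorphism without invoking the extremal clause directly.
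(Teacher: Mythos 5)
Your proposal is correct and matches the paper's approach exactly: the paper proves Lemma \ref{6} solely by citing the Main Theorem of \cite{PN}, whose equality clause for the bound $|M(G)|\leq p^{\frac{1}{2}(n-1)(n-2)+1}$ identifies $G$ as $ES_p(p^3)\times \mathbb{Z}_p^{(n-3)}$, and your arithmetic check that the hypothesis meets this bound precisely when $n=8$ is right. The alternative self-contained sketch you append is not needed and is not what the paper does.
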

\begin{lemma}\label{m3}
There is no $p$-group $G$ of order $p^n(n \geq 6)$ with $|G'| \geq p^3$ and $|M(G)|=p^{\frac{1}{2}n(n-1)-6}$.
\end{lemma}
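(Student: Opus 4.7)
The lemma can be obtained as a direct numerical consequence of Niroomand's main theorem in \cite{PN}, which bounds the Schur multiplier of a non-abelian $p$-group in terms of $|G|$ and $|G'|$. My plan is simply to invoke this bound and observe that the parameter regime $n\geq 6$, $|G'|\geq p^3$ forces $t(G) > 6$, contradicting the hypothesis $t(G)=6$.

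Specifically, I would recall Niroomand's bound: for a non-abelian $p$-group of order $p^n$ with $|G'|=p^k$,
\[
|M(G)| \;\leq\; p^{\frac{1}{2}(n+k-2)(n-k-1)+1}.
\]
Translating into the defect invariant $t(G) = \frac{1}{2}n(n-1) - \log_p|M(G)|$, a brief expansion yields the identity
\[
n(n-1) - (n+k-2)(n-k-1) \;=\; 2(n-1) + k(k-1),
\]
and hence the clean lower bound
\[
t(G) \;\geq\; (n-1) + \tfrac{1}{2}k(k-1) - 1 \;=\; n - 2 + \binom{k}{2}.
\]
Under the hypotheses $n\geq 6$ and $k\geq 3$, the right-hand side is at least $6 - 2 + 3 = 7$, so $t(G)\geq 7$, contradicting $t(G)=6$ and finishing the argument.

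I do not expect a genuine obstacle here: the entire proof reduces to one cited inequality followed by a one-line algebraic simplification. The only point requiring care is quoting the bound from \cite{PN} in its precise form, since the constants determine whether the above contradiction is sharp. As a sanity check, the bound becomes tight at $(n,k)=(5,3)$, where it only yields $t(G)\geq 6$; this is consistent with the Main Theorem, which lists groups such as $\Phi_3(1^5)$ with $|G|=p^5$ and $|G'|=p^3$. If one wished to avoid invoking \cite{PN} as a black box, the same estimate could be recovered by iterated application of Proposition \ref{J1} to the terms of the lower central series, but the direct citation is by far the shortest route.
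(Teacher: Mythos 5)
Your proof is correct and follows essentially the same route as the paper, which simply states that this lemma (together with Lemmas \ref{5} and \ref{6}) follows from the main theorem of \cite{PN}; you have merely supplied the arithmetic that the paper leaves implicit, and your identity $n(n-1)-(n+k-2)(n-k-1)=2(n-1)+k(k-1)$ and the resulting bound $t(G)\geq n-2+\binom{k}{2}\geq 7$ for $n\geq 6$, $k\geq 3$ both check out. (One quibble in your sanity-check aside: $\Phi_3(1^5)$ has derived subgroup of order $p^2$, not $p^3$ --- in fact the paper's first lemma of Section 3 shows that no group of order $p^5$ with $|G'|=p^3$ attains $t(G)=6$, so the bound is not sharp there --- but this does not affect your argument.)
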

\begin{lemma}\label{7}
There is no non-abelian $p$-group $G$ of order $p^7$ with $|M(G)|=p^{\frac{1}{2}7(7-1)-6}=p^{15}$.
\end{lemma}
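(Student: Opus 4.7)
The plan is to argue by contradiction: assume $G$ is a non-abelian $p$-group of order $p^7$ with $|M(G)| = p^{15}$. By Lemma \ref{m3} we have $|G'| \le p^2$, so I split on $|G'| \in \{p, p^2\}$.

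In the case $|G'| = p$, the group $G$ is of nilpotency class $2$ with $G' \subseteq Z(G)$, and the nondegenerate alternating commutator pairing $G/Z(G) \times G/Z(G) \to G' \cong \mathbb{Z}_p$ makes $G/Z(G)$ elementary abelian of even rank $2d$. Applying Proposition \ref{J1} with $c = 2$ yields
\[
p^{16} \;=\; |G'|\cdot|M(G)| \;\le\; |M(G/G')| \cdot |G/Z(G) \otimes G'| \;=\; |M(G/G')| \cdot p^{2d}.
\]
Since $G/G'$ is abelian of order $p^6$, $|M(G/G')| \le p^{15}$ with equality iff $G/G' \cong \mathbb{Z}_p^{(6)}$, and the next largest value over abelian $p$-groups of this order is $p^{10}$. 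So either $G/G' \cong \mathbb{Z}_p^{(6)}$ (forcing $G^p \subseteq G'$) or $d \ge 3$. The latter forces $Z(G) = G'$, making $G$ extraspecial of order $p^7$, so by Theorem \ref{SHH} $|M(G)| = p^{14}$, contradicting $|M(G)| = p^{15}$. Writing $G = E \cdot Z(G)$ as an internal central product with $E$ extraspecial of order $p^{2d+1}$ and $E \cap Z(G) = G'$, the constraint $G^p \subseteq G'$ restricts $Z(G)$ to be either elementary abelian or $\mathbb{Z}_{p^2} \times \mathbb{Z}_p^{(k)}$ for suitable $k$ (with $G'$ the unique order-$p$ subgroup of the $\mathbb{Z}_{p^2}$ factor). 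Enumerating the four sub-cases ($d \in \{1,2\}$; direct or twisted) and computing $|M(G)|$ via $|M(A \times B)| = |M(A)|\,|M(B)|\,|A/A' \otimes B/B'|$ together with Theorems \ref{SHH} and \ref{SHHH}, each sub-case yields $|M(G)| \in \{p^{14}, p^{16}\}$; the key auxiliary observation is $ES_p(p^3) \ast_{G'} \mathbb{Z}_{p^2} \cong ES_{p^2}(p^3) \ast_{G'} \mathbb{Z}_{p^2} \cong \Phi_2(211)b$ (via $\alpha \mapsto \alpha c^{-1}$), so no twisted case is double-counted. No sub-case produces $|M(G)| = p^{15}$.

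In the case $|G'| = p^2$, I split on nilpotency class. If $G$ has class $2$, Proposition \ref{J1} with $c=2$ yields $p^{17} \le |M(G/G')| \cdot |G/Z(G) \otimes G'|$, with $G/G'$ abelian of order $p^5$ so $|M(G/G')| \le p^{10}$. Sub-dividing on $G' \cong \mathbb{Z}_{p^2}$ versus $\mathbb{Z}_p^{(2)}$ and on the rank of $G/Z(G)$, a direct computation (using Theorems \ref{SHH} and \ref{SHHH}) rules out $|M(G)| = p^{15}$. If $G$ has class $3$, then $\gamma_3(G) \subsetneq G'$ is of order $p$; Proposition \ref{J1} with $c = 3$ bounds $|M(G)|$ in terms of $|M(G/\gamma_3(G))|$, where $G/\gamma_3(G)$ has order $p^6$ with derived subgroup of order $p$, and the analysis of the previous sub-case (together with the multipliers from Theorems \ref{SHH} and \ref{SHHH}) again rules out $|M(G)| = p^{15}$.

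The main obstacle is the $|G'| = p^2$ case, where the several structural possibilities ($G'$ cyclic or elementary abelian, class $2$ or $3$, and various shapes of $Z(G)$ or $Z_2(G)$) must each be eliminated individually. In particular, the class-$3$ sub-case requires careful evaluation of the tensor factor $G/Z_2(G) \otimes \gamma_3(G)$ and of $|M(G/\gamma_3(G))|$ for each possible position of $\gamma_3(G)$ inside $G'$, making this the most involved portion of the argument.
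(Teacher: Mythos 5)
The paper does not prove this lemma by hand at all: it simply cites \cite[Theorem 21]{PN2}, so your entire case analysis is a different (self-contained) route. Your $|G'|=p$ branch is essentially sound: the reduction via Proposition \ref{J1} to ``$G/G'$ elementary abelian or $d\ge 3$'', the extraspecial computation from Theorem \ref{SHH}, and the central-product decomposition $G=E\cdot Z(G)$ are all correct, and the sub-cases do give $p^{14}$ or $p^{16}$. One small inaccuracy there: for $d=2$ with $Z(G)\cong\mathbb{Z}_{p^2}\times\mathbb{Z}_p$ you need $|M(E\ast\mathbb{Z}_{p^2})|$ for $E$ extraspecial of order $p^5$ (the group $\Phi_5(21^4)b$), and that is \emph{not} supplied by Theorems \ref{SHH} and \ref{SHHH} together with the K\"unneth-type formula; it requires a separate argument (the paper computes it in Lemma \ref{3} via Theorem \ref{J} and \cite[Corollary 3.2]{MRRR}).

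The genuine gap is the $|G'|=p^2$ branch, which you acknowledge is the hard part but do not actually carry out. In the class-$2$ sub-case your own inequality $p^{17}\le |M(G/G')|\cdot|G/Z(G)\otimes G'|$ is too weak to conclude anything: when $Z(G)=G'\cong\mathbb{Z}_p\times\mathbb{Z}_p$ and $G/G'\cong\mathbb{Z}_p^{(5)}$ (such groups of order $p^7$ exist), the right-hand side is $p^{10}\cdot p^{10}$, giving only $|M(G)|\le p^{18}$, and Theorems \ref{SHH} and \ref{SHHH} say nothing about such a group --- they cover extraspecial groups and groups of order $p^4$ only. Even the sharper route via Theorem \ref{J} with $K\le G'\cap Z(G)$ of order $p$ gives $|M(G)|\le p^{-1}|M(G/K)|\,p^{d(G/K)}\le p^{-1}\cdot p^{11}\cdot p^{5}=p^{15}$, which lands exactly on the value you must exclude, so the borderline case ($G/K\cong ES_p(p^3)\times\mathbb{Z}_p^{(2)}\times\mathbb{Z}_p$, say) must be analyzed individually, e.g.\ by the Blackburn--Evens method of Theorem \ref{B}. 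The class-$3$ sub-case has the same defect: Proposition \ref{J1} with $c=3$ gives $p\,|M(G)|\le|M(G/\gamma_3(G))|\cdot|G/Z_2(G)\otimes\gamma_3(G)|\le p^{11}\cdot p^{5}$, again not excluding $p^{15}$ without identifying exactly which quotients attain $|M|=p^{11}$ and ruling out tightness. As written, ``a direct computation rules out $p^{15}$'' is an assertion, not a proof; to make your argument complete you must either supply that computation or, as the paper does, invoke \cite[Theorem 21]{PN2}.
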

\begin{proof}
It follows from \cite[Theorem 21]{PN2}. \hspace{5.4cm} $\hfill\square$
\end{proof}  
\begin{lemma}\label{8}
Let $G$ be a non-abelian $p$-group of order $p^4$ with $|M(G)|=p^{\frac{1}{2}4(4-1)-6}=1$. Then $G \cong \Phi_2(31)$.
\end{lemma}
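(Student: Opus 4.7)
The plan is to reduce Lemma \ref{8} to a direct consultation of James's isoclinism classification together with Theorem \ref{SHHH}. Since $G$ is non-abelian of order $p^4$, the quotient $G/G'$ is non-cyclic, forcing $|G/G'| \geq p^2$ and hence $|G'| \leq p^2$. Thus $G$ has nilpotency class $2$ (with $|G'|=p$) or $3$ (with $|G'|=p^2$), and the only remaining question is which isoclinism type $G$ belongs to.

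Next I would invoke James's classification in the form used throughout the paper: the non-abelian isoclinism types of order $p^4$ (for odd $p$) are exactly the six class-$2$ types $\Phi_2(1^4)$, $\Phi_2(211)a$, $\Phi_2(211)b$, $\Phi_2(211)c$, $\Phi_2(22)$, $\Phi_2(31)$ together with the three class-$3$ types $\Phi_3(1^4)$, $\Phi_3(211)a$, $\Phi_3(211)b_r$. For order $p^4$ each of these isoclinism classes contains a single isomorphism class, so proving the lemma reduces to identifying which of these nine groups has trivial Schur multiplier.

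Reading off Theorem \ref{SHHH}, the Schur multipliers are $\mathbb{Z}_p^{(4)}$, $\mathbb{Z}_p \times \mathbb{Z}_p$, $\mathbb{Z}_p \times \mathbb{Z}_p$, $\mathbb{Z}_p \times \mathbb{Z}_p$, $\mathbb{Z}_p$, $1$ in the class-$2$ case and $\mathbb{Z}_p \times \mathbb{Z}_p$, $\mathbb{Z}_p$, $\mathbb{Z}_p$ in the class-$3$ case. The unique group among these whose Schur multiplier is trivial is $\Phi_2(31)$, which completes the argument.

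The hard part is essentially nonexistent: the lemma is a direct corollary of Theorem \ref{SHHH} once James's enumeration is accepted. The only mild subtlety worth noting is the derived-subgroup bound $|G'| \leq p^2$, which is immediate from non-abelianness as recorded in the first paragraph, so that no exotic class-$4$ case needs to be excluded.
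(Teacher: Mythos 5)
Your proposal is correct and follows exactly the route the paper takes: its proof of Lemma \ref{8} consists of the single line ``This result follows from Theorem \ref{SHHH}'', and you have simply made explicit the enumeration of the nine non-abelian isoclinism types of order $p^4$ and the reading-off of their multipliers. No discrepancy to report.
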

\begin{proof}
This result follows from Theorem \ref{SHHH}. \hspace{5 cm} $\hfill\square$
\end{proof} 
Hence from the discussion above it is clear that we have to study groups of order $p^5$ and $p^6$. 
\section{Groups of order $p^5$}
In this section we characterize groups of order $p^5$ which have Schur multiplier of order $p^4$. 
\begin{lemma}
There is no non-abelian $p$-group of order $p^5$ with $|G'|= p^3$ and $|M(G)|=p^{\frac{1}{2}5(5-1)-6}=p^4$.
\end{lemma}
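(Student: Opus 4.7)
The plan is to apply Theorem \ref{J} to a carefully chosen central subgroup $K$ of order $p$, and then reach a contradiction using the classification of Schur multipliers of groups of order $p^4$ in Theorem \ref{SHHH}.

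First I would observe that since $|G'|=p^3$ and $|G|=p^5$, we have $|G/G'|=p^2$, so $G$ is $2$-generated. As $G$ is non-abelian, $G/G'$ cannot be cyclic, hence $G/G' \cong \mathbb{Z}_p \times \mathbb{Z}_p$. Because $G$ is a nilpotent non-abelian group, the last non-trivial term $\gamma_c(G)$ of the lower central series lies in $Z(G) \cap G'$, which is therefore non-trivial. Thus I can choose a subgroup $K \leq Z(G) \cap G'$ with $|K|=p$.

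Next I would set $A = G/K$ and collect the data needed for Theorem \ref{J}. Since $K \leq G'$, we have $G' \cap K = K$ (so $|G' \cap K|=p$) and $|A'| = |G'/K| = p^2$. Also $A/A' \cong G/G' \cong \mathbb{Z}_p \times \mathbb{Z}_p$, which gives $|A \otimes K| = |A^{ab} \otimes \mathbb{Z}_p| = p^2$. Finally $|M(K)|=1$. Substituting into Theorem \ref{J}, the divisibility
\[
|M(G)|\cdot |G' \cap K| \;\Big|\; |M(A)| \cdot |M(K)| \cdot |A \otimes K|
\]
becomes $|M(G)| \cdot p \mid |M(A)| \cdot p^2$, that is, $|M(G)| \leq p \cdot |M(A)|$.

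Finally, I would invoke Theorem \ref{SHHH}: the group $A$ is a non-abelian $p$-group of order $p^4$ with $|A'|=p^2$, so $A$ is isoclinic to one of $\Phi_3(211)a$, $\Phi_3(211)b_r$, or $\Phi_3(1^4)$, all of which have $|M(A)| \leq p^2$. Combining this with the inequality above gives $|M(G)| \leq p^3$, contradicting $|M(G)|=p^4$. The only non-routine point is verifying that a central subgroup of order $p$ inside $G'$ exists and that $G/G'$ is elementary abelian, both of which are immediate; the rest is a direct computation once Theorem \ref{J} and Theorem \ref{SHHH} are in hand. \hspace{5cm}$\hfill\square$
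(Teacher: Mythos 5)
Your proof is correct, and it is essentially the argument the paper uses for its nilpotency class $3$ case, but applied uniformly. The paper first splits according to the nilpotency class of $G$: for class $4$ (classes $\Phi_9$, $\Phi_{10}$) it invokes Proposition \ref{J1} together with Theorem \ref{SHHH}, while for class $3$ (class $\Phi_6$) it does exactly what you do --- take $K \leq Z(G)\cap G'$ of order $p$, apply Theorem \ref{J} to get $|M(G)|\,p \leq |M(G/K)|\,p^2$, and bound $|M(G/K)| \leq p^2$ via Theorem \ref{SHHH} since $G/K$ has order $p^4$ and derived subgroup of order $p^2$. Your observation that $\gamma_c(G) \leq Z(G)\cap G'$ is nontrivial in every case, and that $G/G' \cong \mathbb{Z}_p \times \mathbb{Z}_p$ forces $|A\otimes K| = p^2$ regardless of the class, lets you dispense with the case split and with Proposition \ref{J1} and James's identification of the relevant isoclinism classes entirely; this is a mild simplification. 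One small imprecision: you say $A$ is \emph{isoclinic} to one of $\Phi_3(211)a$, $\Phi_3(211)b_r$, $\Phi_3(1^4)$, but since the order of the Schur multiplier is not an isoclinism invariant, what you actually need (and what is true, for $p$ odd, by James's classification) is that $A$ is \emph{isomorphic} to one of these groups, so that Theorem \ref{SHHH} applies directly; like the paper's own argument, this implicitly restricts to odd $p$, with $p=2$ handled separately by computation in the main theorem.
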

\begin{proof}
Notice that nilpotency class of $G$ is either $3$ or $4$.

Let $G$ be of class $4$. Then $G$ lie in the isoclinism classes $\Phi_9$ or $\Phi_{10}$. By Proposition \ref{J1} and Theorem \ref{SHHH} we have $|M(G)| \leq p^3$.

Now assume that the nilpotency class of $G$ is 3. Then it follows that $G$ lie in $\Phi_6$ in \cite{RJ}. In this case take any subgroup $K \subset Z(G) \cap G'$ of order $p$. By Theorem \ref{J} we have $|M(G)|p \leq |M(G/K)|p^2$. Here $G/K$ is of order $p^4$ with $|(G/K)'|=p^2$. 
Hence it follows from Theorem \ref{SHHH} that $|M(G)| \leq p^3$. This concludes the proof.  $\hfill  \square$
\end{proof} 
Before we proceed to the next result, we explain a method by Blackburn and Evens \cite{BE} for computing Schur multiplier of $p$-groups of class $2$ with $G/G'$ is elementary abelian.

Here $G/G'$ and $G'$ are elementary abelian of order $p^3$ and $p^2$ respectively. We can consider $G/G'$ and $G'$ as vector spaces over $GF(p)$, denote by $V, W$ respectively. Let $v_1,v_2 \in V$ such that $v_i=g_iG', i \in \{1,2\}$ and take $(v_1,v_2)=[g_1,g_2]$.

Let $X_1$ be the subspace of $V \otimes W$ spanned by all \\
\centerline{$v_1 \otimes (v_2,v_3) + v_2 \otimes (v_3,v_1) + v_3 \otimes (v_1,v_2)$} 

Consider a map $f:V \rightarrow W$ given by $f(gG')=g^p$, $g \in G$. We denote by $X_2$ the subspace spanned by all $v \otimes f(v)$ for $v \in V$ and take $X=X_1+X_2$. Now consider a homomorpism $\sigma: V\wedge V \rightarrow V \otimes W/X$ given by\\
$\sigma(v_1 \wedge v_2)=v_1 \otimes f(v_2)+ {p \choose 2}v_2 \otimes (v_1,v_2)+X$.
Then there exists an abelian group $M^*$ having a subgroup $N$ for which \\
\centerline{$1 \rightarrow V\otimes W/X \rightarrow M^* \xrightarrow{\xi} V \wedge V \rightarrow 1$}\\
is exact, where $N \cong V \otimes W/X, M^*/N \cong V \wedge V$  and $(\sigma\xi)(m)=m^p$ for all $m \in M^*$.
\begin{thm}\label{B}\cite{BE}
With the notation above, consider a homomorphism $\rho:V \wedge V \rightarrow W$ given by\\
\centerline{$\rho(v_1 \wedge v_2)=(v_1,v_2)$ for all $v_1,v_2 \in V$.}\\
Denote by $M$, the subgroup of $M^*$ containg $N$ for which $M/N$ corresponds to $ker \rho$. Then $M(G) \cong M$
\end{thm}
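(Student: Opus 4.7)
The natural approach is via Hopf's formula, $M(G) \cong (F' \cap R)/[F,R]$, applied to a carefully chosen free presentation $G = F/R$, together with an explicit identification of the resulting quotient with the subgroup $M$ of $M^*$ described in the statement.

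First, take $F$ to be the free group of rank $d = \dim_{\GF(p)} V$ on generators $x_1,\ldots,x_d$, mapping to a chosen basis $v_1,\ldots,v_d$ of $V = G/G'$. Since $G$ has class $2$ with elementary abelian $G' = W$, the normal subgroup $R$ contains both $\gamma_3(F) = [F, F']$ and $(F')^p$. Consequently $F/[F, R]$ is nilpotent of class at most $2$ with elementary abelian commutator, and $R/[F, R]$ lies in its center. The candidates are
\[
\widetilde{M}^* := F'/[F, R], \qquad \widetilde{N} := \bigl((F')^p \gamma_3(F)\bigr)/[F, R],
\]
yielding a central extension $1 \to \widetilde{N} \to \widetilde{M}^* \to V \wedge V \to 1$ through the canonical identification $F'/(F')^p\gamma_3(F) \cong V \wedge V$.

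Second, one analyses $\widetilde{N}$ as a quotient of $V \otimes W$. Inside $F/[F, R]$ the Hall--Witt identity for triple commutators collapses to the cyclic sum
\[
x_i \otimes [x_j, x_k] + x_j \otimes [x_k, x_i] + x_k \otimes [x_i, x_j] \equiv 0,
\]
which is precisely the subspace $X_1$ in the statement. Next, the defining relations $x_i^p = w_i$ (where $w_i \in F'$ represents $f(v_i)$), combined with the collection formula
\[
(xy)^p \equiv x^p\, y^p\, [y,x]^{\binom{p}{2}} \pmod{\gamma_3(F) \cdot (F')^p},
\]
produce the relations $v \otimes f(v)$ spanning $X_2$ and force the $p$-th power map on $\widetilde{M}^*$ to coincide with the prescribed $\sigma$. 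Hence $\widetilde{N} \cong V \otimes W/X$, and $\widetilde{M}^*$ fits the structure of $M^*$. The natural commutator map $F' \to G'$ descends through $V \wedge V$ to the map $\rho$, and $M(G) \cong (F' \cap R)/[F, R]$ is exactly the preimage of $\ker \rho$ under $\xi$, i.e., $M$.

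The main obstacle is the bookkeeping in the second step: one must verify that the relations generating $R$ contribute no extra vanishings in $F'/[F,R]$ beyond $X_1 + X_2$, and that the $p$-th power structure of $\widetilde{M}^*$ is captured exactly by $\sigma$. This requires careful expansion of the collection formula, keeping track of the $\binom{p}{2}$ correction term that appears in $\sigma$, and a separate sanity check at $p = 2$ where the binomial coefficient vanishes modulo $2$ and the formula must be interpreted correctly. Once these calculations are done rigorously, the identification $M(G) \cong M$ is immediate from Hopf's formula and the description of $\ker \rho$.
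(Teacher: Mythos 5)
The paper itself gives no proof of this statement: it is quoted from Blackburn and Evens \cite{BE}, and their argument is indeed the Hopf-formula computation you outline, so your overall strategy is the right one. However, as written your sketch has one genuine gap and two internal errors. The gap is that the heart of the theorem is the isomorphism $\widetilde{N}\cong V\otimes W/X$, i.e.\ that the kernel of the natural surjection from $V\otimes W$ onto $\bigl((F')^p\gamma_3(F)\bigr)/[F,R]$ is \emph{exactly} $X=X_1+X_2$ and no larger. Checking that the Hall--Witt identity and the collection formula place $X_1+X_2$ inside the kernel is the easy inclusion; the reverse inclusion cannot be read off from the relations of $G$ and requires exhibiting a suitable central extension of $G$ (equivalently, a quotient of $F/[F,R]$) in which $V\otimes W/X$ survives intact, or an equivalent dimension count. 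You flag this as ``bookkeeping,'' but it is the substantive content of the theorem, and the proposal offers no mechanism for carrying it out.

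The internal errors: (i) $F/[F,R]$ has nilpotency class at most $3$, not $2$ --- its third lower central term is $\gamma_3(F)[F,R]/[F,R]$, which is generally nontrivial and in fact sits inside $\widetilde{N}$ --- and its derived subgroup $F'/[F,R]=\widetilde{M}^*$ is abelian (since $F''\subseteq\gamma_4(F)\subseteq[F,R]$) but \emph{not} elementary abelian; if it were, $\sigma$ would be forced to vanish and the defining property $(\sigma\xi)(m)=m^p$ of the extension $1\to N\to M^*\to V\wedge V\to 1$ would be vacuous. (ii) Your remark about $p=2$ is backwards: $\binom{p}{2}=p(p-1)/2\equiv 0\pmod p$ for odd $p$, while $\binom{2}{2}=1$, so the correction term $\binom{p}{2}\,v_2\otimes(v_1,v_2)$ in $\sigma$ is invisible for odd $p$ and matters precisely at $p=2$.
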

\begin{lemma}\label{2}
Let $G$ be a non-abelian $p$-group of order $p^5$ with $|G'|=p^2$ and $|M(G)|=p^{\frac{1}{2}5(5-1)-6}=p^4$. Then $G \cong \Phi_3(1^5), \Phi_7(1^5)$.
\end{lemma}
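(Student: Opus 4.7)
The plan is to split by the nilpotency class of $G$. Since $|G'|=p^2$ and $|G|=p^5$, the class $c$ is either $2$ or $3$. Each case is controlled by a different tool from Section 2, and within each case I enumerate the candidate isoclinism types from James' list of order-$p^5$ groups.

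If $c=3$, then $|\gamma_3(G)|=p$ and $G/\gamma_3(G)$ is an order-$p^4$ group with derived subgroup of order $p$. Proposition \ref{J1} with $c=3$ gives
\[
p\,|M(G)| \leq |M(G/\gamma_3(G))|\cdot|G/Z_2(G)\otimes \gamma_3(G)|.
\]
The first factor is read off from Theorem \ref{SHHH}; the second equals $p^{d(G/Z_2(G))}$ because $\gamma_3(G)\cong\mathbb{Z}_p$. Running through the class-$3$, order-$p^5$ candidates in James' families (which live in $\Phi_3$ and $\Phi_7$), only $\Phi_3(1^5)$ and $\Phi_7(1^5)$ keep the right-hand side at least $p^5$. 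To upgrade the bound to equality for these two, I would use the decomposition $\Phi_3(1^5)=\Phi_3(1^4)\times \mathbb{Z}_p$ together with Theorem \ref{SHHH} and a K\"unneth-type product formula, and for $\Phi_7(1^5)$ carry out a direct presentation-level computation.

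If $c=2$, then $G'\leq Z(G)$, and I split on whether $G/G'$ is elementary abelian. In the elementary-abelian subcase I apply the Blackburn-Evens machinery (Theorem \ref{B}): setting $V=G/G'$ and $W=G'$, I compute $X=X_1+X_2\subseteq V\otimes W$, form $M^*$, and identify $M\cong M(G)$ via $\ker\rho\subseteq V\wedge V$; the short computation for each candidate in families $\Phi_3$ and $\Phi_4$ gives $|M(G)|\neq p^4$. If $G/G'$ is not elementary abelian, I choose a central subgroup $K\leq G'\cap Z(G)$ of order $p$ and apply Theorem \ref{J} to $A=G/K$:
\[
p\,|M(G)| \leq |M(G/K)|\cdot|M(K)|\cdot|G/K\otimes K| = |M(G/K)|\cdot p^{d(G/K)}.
\]
Since $|G/K|=p^4$ with $|(G/K)'|=p$, Theorem \ref{SHHH} bounds $|M(G/K)|$, and a short check rules out $|M(G)|=p^4$.

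The main obstacle will be the class-$3$ subcase: Proposition \ref{J1} yields only an upper bound, so for the two target groups $\Phi_3(1^5)$ and $\Phi_7(1^5)$ one must separately verify the equality $|M(G)|=p^4$, and for $\Phi_7(1^5)$ no product decomposition is available, forcing a direct computation from the presentation (or a Blackburn-Evens-style analysis of $G/\gamma_3(G)$ lifted back to $G$). Keeping the book-keeping across James' isoclinism families straight, and ensuring the inequality in Proposition \ref{J1} is strict for every unwanted candidate, is the second source of potential difficulty.
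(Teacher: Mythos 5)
Your overall architecture (enumerate James' isoclinism families $\Phi_3,\Phi_4,\Phi_7,\Phi_8$ and attack each with Proposition \ref{J1}, Theorem \ref{J}, or Blackburn--Evens) matches the paper's, but there is a genuine gap at the crux of the lemma: you have no workable argument for the lower bound $|M(\Phi_7(1^5))|\geq p^4$. "A direct presentation-level computation" is not a method, and Blackburn--Evens (Theorem \ref{B}) applies only to class-$2$ groups with elementary abelian central quotient, so it cannot be "lifted back" to the class-$3$ group $\Phi_7(1^5)$. The paper's actual argument here is the one substantive idea in the proof: it invokes the exact sequence $\Hom(Z,\mathbb{C}^*)\rightarrow M(G/Z)\rightarrow M(G)\rightarrow G/G'\otimes Z$ from \cite{GK} with $Z=Z(G)$, observes that $\Phi_7(1^5)$ is \emph{capable} (it is $E/Z(E)$ for $E$ of order $p^6$ in $\Phi_{30}$), deduces that the transgression-type map $\delta$ is nontrivial, and hence that $|M(G)|>|M(G/Z)|/p=p^3$; the matching upper bound comes from \cite[Theorem 3.1]{MRR} applied to the maximal subgroup $\langle\alpha,\alpha_1,\alpha_2,\alpha_3\rangle$. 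Without the capability observation (or an equivalent), your plan does not close.

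Two further problems. First, your claim that Proposition \ref{J1} eliminates every unwanted class-$3$ candidate is false: for $\Phi_3(2111)c$ the quotient $G/\gamma_3(G)$ has Schur multiplier too large for the inequality to force $|M(G)|\leq p^3$, and the paper must switch to Theorem \ref{J} with $K=Z(G)$ for that group (you flag this risk but offer no remedy). Second, your enumeration is off: the class-$3$ candidates with $|G'|=p^2$ include the family $\Phi_8$ (the metacyclic group $\Phi_8(32)$, with $G'\cong\mathbb{Z}_{p^2}$), which you never mention, while $\Phi_3$, which is class $3$, is listed by you among the class-$2$ groups to be treated by Blackburn--Evens, where that machinery does not apply. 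These bookkeeping errors are repairable, but the missing lower-bound argument for $\Phi_7(1^5)$ is not.
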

\begin{proof}
Groups of order $p^5$ with $|G'|=p^2$ lie in the isoclinism classes $\Phi_3, \Phi_4, \Phi_7$ or $\Phi_8$ \cite{RJ}.
  
In isoclinism class $\Phi_3$, some groups are direct product of its subgroups. So for them we can easily compute $M(G)$ and we see $|M(\Phi_3(1^5))|=p^4$.  Now we consider other groups. We have $|M(G)|p \leq |M(G/\gamma_3(G))|p^2$ using Proposition \ref{J1}. Observe that $|M(G/\gamma_3(G))| \leq p^2$ except $\Phi_3(2111)c$. Therefore $|M(G)| \leq p^3$ except $\Phi_3(2111)c$. For $G \cong \Phi_3(2111)c$ by Theorem \ref{J} (taking $K=Z(G)$) it follows that $|M(G)| \leq p^3$.

In class $\Phi_4$, every group $G$ has nilpotency class 2 with $G/G'$ elementary abelian. By Theorem \ref{B} $|M(G)|/|N|=|V \wedge V|/|W|$ and thus one can show that $G=\Phi_4(1^5)$ has $|M(G)|=p^6$ but for all other groups $G$ in $\Phi_4$, $|M(G)| \leq p^3$.

In class $\Phi_7$, for the groups $G \cong \Phi_7(2111)a, \Phi_7(2111)b_r,\Phi_7(2111)c$ we can choose a normal subgroup $K$ such that $G/K$ is cyclic with $|M(K)|=p, |K'|=p^2$. Hence $|M(G)| \leq p^3$ by \cite[Theorem 3.1]{MRR}.
Now by \cite[Theorem 2.3.10]{GK} we have the following sequence is exact.\\
\centerline{$\Hom(Z, \mathbb{C}^*) \xrightarrow{Tra} M(G/Z) \xrightarrow{inf} M(G) \xrightarrow{\delta} G/G' \otimes Z$}\\
In particular for $G \cong \Phi_7(1^5)$, taking $Z=Z(G)$, $Im(Tra) \cong ker(Inf) \cong G' \cap Z \cong \mathbb{Z}_p$. We can see in \cite{RJ}, $G$ is capable as $E/Z(E) \cong G$ for groups $E$ of order $p^6$ in isoclinism class $\Phi_{30}$. Thus it follows from  \cite[Corollary 2.5.8 and 2.5.10]{GK} that $\delta$ is not trivial map. So $|ker(\delta)|=|Im(Inf)|=\frac{|M(G/Z)|}{p}=p^3 < |M(G)|$. So $p^4 \leq |M(G)|$ and by \cite[Theorem 3.1]{MRR} (taking $K=<\alpha,\alpha_1, \alpha_2,\alpha_3>$) $|M(G)| \leq p^4$.\\
Hence we conclude $|M(\Phi_7(1^5))|=p^4$.

Finally consider the class $\Phi_8$, consisting of only one group $\Phi_8(32)$. Observe that $|M(\Phi_8(32))| \leq p^3$ by Theorem \ref{J} (taking $K=Z(G)$). \hspace{4cm} $\hfill\square$
\end{proof} 
\begin{lemma}\label{1}
Let $G$ be a non-abelian $p$-group of order $p^5$ with $|G'|=p$ and $|M(G)|=p^{\frac{1}{2}5(5-1)-6}=p^4$. Then $G \cong \Phi_2(2111)c, \Phi_2(2111)d$.
\end{lemma}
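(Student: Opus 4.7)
The plan is to mirror the strategy used in Lemma \ref{2}. Since $|G'|=p$, the group $G$ has nilpotency class two, and by James's classification \cite{RJ} every such $G$ of order $p^5$ lies in the isoclinism class $\Phi_2$. I would run through James's list of representatives of $\Phi_2$ of order $p^5$ and compute $|M(G)|$ in each case.

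First I would dispose of those groups that split as a direct product $G = H \times A$ with $A$ abelian. For these the Schur direct-product formula
$$M(H \times A) \cong M(H) \times M(A) \times (H^{\mathrm{ab}} \otimes A^{\mathrm{ab}})$$
reduces everything to known data: the multipliers of extra-special groups of order $p^3$ (Theorem \ref{SHH}) and of the non-abelian groups of order $p^4$ in $\Phi_2$ (Theorem \ref{SHHH}). Inspection of the candidates shows that the only two producing $|M(G)| = p^4$ are
\begin{align*}
\Phi_2(2111)c &= \Phi_2(211)c \times \mathbb{Z}_p,\\
\Phi_2(2111)d &= ES_p(p^3) \times \mathbb{Z}_{p^2}.
\end{align*}
Indeed, in the first case $|M(\Phi_2(211)c)|=p^2$ while the tensor contribution $(\mathbb{Z}_{p^2}\times \mathbb{Z}_p)\otimes \mathbb{Z}_p$ has order $p^2$; in the second case $|M(ES_p(p^3))|=p^2$ while $(\mathbb{Z}_p\times\mathbb{Z}_p)\otimes \mathbb{Z}_{p^2}$ has order $p^2$. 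The other direct-product candidates are quickly ruled out by the same calculation.

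Next I would rule out the remaining, indecomposable members of $\Phi_2$ of order $p^5$. When $G/G'$ is elementary abelian (necessarily of rank $4$), I would apply the Blackburn--Evens procedure of Theorem \ref{B} with $V = G/G'$ and $W = G' \cong \mathbb{Z}_p$, so that $|V \wedge V|=p^6$ and $|V\otimes W|=p^4$. The subspaces $X_1, X_2 \subseteq V \otimes W$ are read off directly from the commutator and $p$-th power relations provided by James's presentation, and the identity $|M(G)|=|V \otimes W/X|\cdot |\ker \rho|$ forces $|M(G)|\neq p^4$ for each such $G$. For indecomposables whose abelianization is not elementary abelian, I would take $K$ of order $p$ inside $G' \cap Z(G) = G'$ and apply Theorem \ref{J}: then $G/K$ is a group of order $p^4$ whose multiplier is controlled by Theorem \ref{SHHH}, and the resulting inequality forces $|M(G)| < p^4$.

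The principal difficulty here is bookkeeping rather than any single computation: one must confirm against James's tables that no representative in $\Phi_2$ of order $p^5$ has been missed, and for each class-two indecomposable correctly identify $X_1$, $X_2$, and the map $\rho$ of Theorem \ref{B} in a basis matched to the given presentation. Once this case analysis is complete, the only survivors with $|M(G)|=p^4$ are $\Phi_2(2111)c$ and $\Phi_2(2111)d$.
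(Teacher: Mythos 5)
Your overall plan (K\"unneth for the split groups, then inequalities for the rest) is close to the paper's, and your computations certifying $|M(\Phi_2(2111)c)|=|M(\Phi_2(2111)d)|=p^4$ are correct. But there are two genuine gaps. The first is your opening claim that every group of order $p^5$ with $|G'|=p$ lies in $\Phi_2$. The condition $|G'|=p$ forces nilpotency class two, but not isoclinism to $ES_p(p^3)$: the extra-special groups of order $p^5$ (James's class $\Phi_5$, with $G/Z(G)\cong\mathbb{Z}_p^{(4)}$ rather than $\mathbb{Z}_p\times\mathbb{Z}_p$) also satisfy $|G'|=p$, and your case analysis never examines them. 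The omission is harmless to the conclusion, since Theorem \ref{SHH} gives $|M(G)|=p^5\neq p^4$ for these groups, but the step must be added explicitly, as the paper does.

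The second gap is in your treatment of the indecomposable members whose abelianization is not elementary abelian. Since $|G'|=p$, the only choice of $K\leq G'$ of order $p$ is $K=G'$, so $G/K=G/G'$ is \emph{abelian}; Theorem \ref{SHHH} concerns non-abelian groups of order $p^4$ and says nothing about $M(G/K)$ here. What Theorem \ref{J} actually yields with this $K$ is $|M(G)|\leq p^{-1}\,|{\wedge^2}(G/G')|\cdot|G/G'\otimes\mathbb{Z}_p|$, which is at most $p^3$ for abelianization types $(4)$, $(3,1)$ and $(2,2)$, but only gives $|M(G)|\leq p^5$ for type $(2,1,1)$ --- and no choice of $K$ inside $G'$ can do better, since $\Phi_2(2111)c$ itself has abelianization $\mathbb{Z}_{p^2}\times\mathbb{Z}_p\times\mathbb{Z}_p$ and multiplier of order exactly $p^4$. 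So your inequality does not ``force $|M(G)|<p^4$'' for the indecomposable candidates with abelianization of type $(2,1,1)$, which are precisely the groups ($\Phi_2(311)b$ and $\Phi_2(221)c$ in the paper's analysis) that remain to be eliminated. The paper uses the computation above only to restrict $G/G'$ to the types $(2,1,1)$ and $(1^4)$, and then kills the two surviving non-split groups by a second application of Theorem \ref{J} with a different, suitably chosen central subgroup $K$ not contained in $G'$; you need some such second argument. (Your Blackburn--Evens step is sound in principle but likely vacuous, since the members of $\Phi_2$ of order $p^5$ with elementary abelian abelianization all split as direct products and are already covered by your first step.)
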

\begin{proof}
Groups of order $p^5$ with $|G'|=p$ lie in the isoclinism classes $\Phi_2$ and $\Phi_5$ \cite{RJ}.\\ The isoclinism class $\Phi_5$ consists of extra-special $p$-groups which have Schur multiplier of order $p^5$ by Theorem \ref{SHH}. Now we consider the class $\Phi_2$. Since $|M(G)|=p^4$, by Theorem \ref{J} we have $G/G' \cong \mathbb{Z}_{p^2} \times \mathbb{Z}_p \times \mathbb{Z}_p$ or $\mathbb{Z}_p^{(4)}$. Now it follows that $G$ is either isomorphic to $\Phi_2(311)b, \Phi_2(221)c$ or $G$ is direct product of its subgroups.

If $G \cong \Phi_2(311)b, \Phi_2(221)c$, then by Theorem \ref{J} $|M(G)| \leq p^3$ for suitable $K$. If $G$ is direct product of its subgroups, then $|M(G)|=p^4$ only for $\Phi_2(2111)c, \Phi_2(2111)d$.  $\hfill\square$
\end{proof}   
\section{Groups of order $p^6$ and Proof of Main theorem}
In this section we characterize groups of order $p^6$ having Schur multiplier of order $p^9$ and prove the Main Theorem.
\begin{lemma}
Let $G$ be a non-abelian $p$-group of order $p^6$ with $|M(G)|=p^{\frac{1}{2}6(6-1)-6}=p^9$. Then $G/G'$ is elementary abelian.
\end{lemma}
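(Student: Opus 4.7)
The plan is to combine Theorem~\ref{J} with Proposition~\ref{J1} to force $G^p\subseteq G'$. By Lemma~\ref{m3} one has $|G'|\in\{p,p^2\}$, which I treat separately.

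In the case $|G'|=p$, the subgroup $G'\cong\mathbb{Z}_p$ is central (a $p$-group cannot act nontrivially on $\mathbb{Z}_p$), so $G$ has nilpotency class exactly $2$ and $[g^p,h]=[g,h]^p=1$ yields $G^p\subseteq Z(G)$. Applying Theorem~\ref{J} with $K=G'$ gives
\[
p^{10}\leq |M(G^{ab})|\cdot p^{d(G^{ab})},
\]
and a direct enumeration of the abelian groups of order $p^5$ using $|M(\bigoplus_i\mathbb{Z}_{p^{a_i}})|=p^{\sum_{i<j}\min(a_i,a_j)}$ leaves only $G^{ab}\cong\mathbb{Z}_p^{(5)}$ or $G^{ab}\cong\mathbb{Z}_{p^2}\oplus\mathbb{Z}_p^{(3)}$. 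To rule out the second option, Proposition~\ref{J1} with $c=2$ produces
\[
p^{10}\leq p^6\cdot p^{d(G/Z(G))},
\]
so $d(G/Z(G))\geq 4$; a quotient of $\mathbb{Z}_{p^2}\oplus\mathbb{Z}_p^{(3)}$ retains $4$ generators only when the kernel is contained in the unique order-$p$ subgroup $p(\mathbb{Z}_{p^2})$. The subcase $Z(G)=G'$ forces $G^p\subseteq G'$, contradicting the $\mathbb{Z}_{p^2}$ summand, while the remaining subcase $|Z(G)|=p^2$ is eliminated by writing $G$ as a central product of a class-$2$ group of order $p^4$ with an extra-special group of order $p^3$ and using the five-term exact sequence of the resulting central extension to bound $|M(G)|<p^9$.

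In the case $|G'|=p^2$, pick a central subgroup $K\leq Z(G)\cap G'$ of order $p$ and set $A=G/K$. Then $|A|=p^5$ and $|A'|=p$, so $A$ is of class $2$ and is a central product of an extra-special $p$-group of order $p^{2k+1}$ ($k\in\{1,2\}$) with an abelian group of order $p^{4-2k}$. Theorem~\ref{J} gives $p^{10}\leq |M(A)|\cdot p^{d(A^{ab})}$, and enumerating the candidates $A$ using Theorem~\ref{SHH} together with the K\"unneth-type formula $M(H\times C)\cong M(H)\oplus M(C)\oplus (H^{ab}\otimes C)$ (and, for the central products that are not direct products, the five-term exact sequence as in the previous case) shows the inequality is satisfied only when $A^{ab}$ is elementary abelian of rank $4$. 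Since $K\subseteq G'$, one has $A^{ab}=G/G'$, so $G^{ab}$ is elementary abelian.

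The main obstacle is the case-by-case multiplier computation for the central products arising in both cases. Ruling out $G^{ab}\cong\mathbb{Z}_{p^2}\oplus\mathbb{Z}_p^{(3)}$ under $|G'|=p$ requires the explicit central product structure of $G$ and the five-term exact sequence applied to the covering by a direct product of smaller groups; the $|G'|=p^2$ analysis likewise requires inspecting each central product decomposition of the $p^5$-quotient $A$ and combining Theorems~\ref{SHH} and~\ref{SHHH} with the K\"unneth-type formula to verify that the Jones inequality is tight only in the elementary abelian case.
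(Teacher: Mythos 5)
Your opening reductions are correct and pleasantly elementary: Theorem~\ref{J} with $K=G'$ does force $G^{ab}\cong\mathbb{Z}_p^{(5)}$ or $\mathbb{Z}_{p^2}\oplus\mathbb{Z}_p^{(3)}$ when $|G'|=p$, and Proposition~\ref{J1} then forces $G/Z(G)\cong\mathbb{Z}_p^{(4)}$ and $|Z(G)|=p^2$ in the second alternative. But the two steps that actually finish the proof both have genuine gaps. In the residual subcase of $|G'|=p$ (a group of type $\Phi_5(2211)$: $G^{ab}\cong\mathbb{Z}_{p^2}\oplus\mathbb{Z}_p^{(3)}$, $G/Z(G)\cong\mathbb{Z}_p^{(4)}$), the tools you name do not yield a strict inequality. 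Writing $G=E/D$ with $E=H_1\times H_2$, $H_2$ extra-special of order $p^3$ and $H_1$ of order $p^4$ with $H_1^{ab}\cong\mathbb{Z}_{p^2}\times\mathbb{Z}_p$, one gets $|M(E)|\le p^2\cdot p^2\cdot p^4=p^8$ and the five-term sequence only gives $|M(G)|\le |M(E)|\cdot|D\cap E'|\le p^9$; likewise Theorem~\ref{J} with $K$ a central subgroup of order $p$ meeting $G'$ trivially gives $|M(G)|\le|M(ES_p(p^5))|\cdot p^4=p^9$ by Theorem~\ref{SHH}. Every soft bound lands exactly on $p^9$, so the group is not excluded. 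This is precisely why the paper instead lists the possible $G/Z(G)$ from \cite{RJ} and invokes the sharper inequality of \cite[Proposition 1]{EW}; to close this subcase you would need either that finer bound or a nontriviality statement about the Ganea map of your central extension, neither of which is in your sketch.

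In the case $|G'|=p^2$ your structural claim about $A=G/K$ is false: a group of order $p^5$ with derived subgroup of order $p$ need not be a central product of an extra-special group with an abelian group (for instance $\Phi_2(41)$ and $\Phi_2(32)a$, where every element mapping onto a hyperbolic line of $A/Z(A)$ has $p$-th power central but outside $A'$, admit no such decomposition), so your enumeration misses candidates. The case can nevertheless be closed cleanly without any enumeration: Theorem~\ref{J} gives $p^{10}\le|M(A)|\,p^{d(A)}$ with $A^{ab}=G/G'$, and if $G/G'$ is not elementary abelian then $d(A)\le3$, forcing $|M(A)|\ge p^7$; by the main theorem of \cite{PN} this occurs for a non-abelian group of order $p^5$ only when $A\cong ES_p(p^3)\times\mathbb{Z}_p^{(2)}$, whose abelianisation is elementary abelian, a contradiction. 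With that substitution your second case is fine, but the $\mathbb{Z}_{p^2}\oplus\mathbb{Z}_p^{(3)}$ subcase of $|G'|=p$ remains open as written.
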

\begin{proof}
From Lemma \ref{m3} we have $|G'| \leq p^2$. It follows from \cite{RJ} that if $|G'|=p$, then $G/Z(G)$ is isomorphic to $\mathbb{Z}_p \times \mathbb{Z}_p$ or $\mathbb{Z}_p^{(4)}$. If $G' \cong \mathbb{Z}_p \times \mathbb{Z}_p$ then $G/Z(G)$ is isomorphic to $ES_p(p^3), \mathbb{Z}_p^{(3)}, ES_p(p^3) \times \mathbb{Z}_p$, $ES_p(p^3) \times \mathbb{Z}_p^{(2)}$ or $\mathbb{Z}_p^{(4)}$. If $G' \cong \mathbb{Z}_{p^2}$, then again by \cite{RJ} $G/Z(G)$ is isomorphic to $\Phi_2(22)$ or $\mathbb{Z}_{p^2} \times \mathbb{Z}_{p^2}$.
In all the above cases using \cite[Proposition 1]{EW} we conclude that if $G/G'$ is not elementary abelian, then $|M(G)| < p^9$, which is not our case. Hence the result follows. \hspace{4 cm} $\hfill\square$ 
\end{proof}  
\begin{lemma}\label{3}
Let $G$ be a non-abelian $p$-group of order $p^6$ with $|G'|=p$ and $|M(G)|=p^9$, then $G$ is isomorphic to one of the following: $\Phi_2(21^4)a, \Phi_2(21^4)b, $ $\Phi_5(21^4)a, \Phi_5(1^6), \Phi_5(21^4)b$.
\end{lemma}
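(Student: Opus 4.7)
My plan is as follows. By the preceding lemma $G/G'$ is elementary abelian, and combined with $|G'|=p$ this forces $G$ to have nilpotency class~$2$ with $V:=G/G'\cong \mathbb{Z}_p^{(5)}$ and $W:=G'\cong \mathbb{Z}_p$. The commutator pairing on $V$ is an alternating form whose radical is $Z(G)/G'$, so $|G/Z(G)|$ is an even power of $p$, namely $p^2$ or $p^4$. By James's classification~\cite{RJ} this places $G$ in the isoclinism class $\Phi_2$ or $\Phi_5$ respectively, so it suffices to go through every group of order $p^6$ in these two classes and compute $|M(G)|$.

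The main tool is Theorem~\ref{B}, applied with the above $V$ and $W$. Since $\dim V=5$, $\dim W=1$ and $\rho:V\wedge V\to W$ is surjective (as $G$ is non-abelian), one has $|\ker\rho|=p^9$, and the short exact sequence in that theorem gives
$$|M(G)| = p^{9}\cdot|(V\otimes W)/X| = p^{14-\dim X},$$
where $X=X_1+X_2\subseteq V\otimes W$. Thus $|M(G)|\geq p^9$ always, with equality if and only if $\dim X=5$, i.e.\ $X=V\otimes W$. For each candidate group I would compute $X_1$ from the Jacobi-type triples $v_1\otimes(v_2,v_3)+v_2\otimes(v_3,v_1)+v_3\otimes(v_1,v_2)$ and $X_2$ from the $p$-power map $f(gG')=g^p$. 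Four of the five asserted groups, namely $\Phi_2(21^4)a=ES_{p^2}(p^3)\times \mathbb{Z}_p^{(3)}$, $\Phi_2(21^4)b=\Phi_2(211)b \times \mathbb{Z}_p^{(2)}$, $\Phi_5(21^4)a=ES_{p^2}(p^5)\times \mathbb{Z}_p$ and $\Phi_5(1^6)=ES_p(p^5)\times \mathbb{Z}_p$, decompose as direct products $H\times A$ with $A$ elementary abelian, so for these one may bypass Blackburn-Evens and use the standard formula $M(H\times A)\cong M(H)\times M(A)\times (H^{\mathrm{ab}}\otimes A)$ together with the Schur multipliers of $H$ from Theorem~\ref{SHH} (for $H$ extra-special) or Theorem~\ref{SHHH} (for $H=\Phi_2(211)b$); each yields $|M(G)|=p^9$. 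For $\Phi_5(21^4)b$, which is not such a direct product, I would apply Theorem~\ref{B} directly: the commutator relations $[\alpha_1,\alpha_2]=[\alpha_3,\alpha_4]=\beta$ yield $\bar\alpha_i\otimes \beta\in X_1$ for $i=1,\dots,4$ via Jacobi triples of the form $(\bar\alpha_1,\bar\alpha_2,\bar\alpha_j)$ or $(\bar\alpha_3,\bar\alpha_4,\bar\alpha_j)$, and the relation $\gamma^p=\beta$ contributes $\bar\gamma\otimes\beta\in X_2$, so $X=V\otimes W$ and $|M(G)|=p^9$.

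To complete the proof I must rule out every other group of order $p^6$ in $\Phi_2\cup \Phi_5$: for each such group I would list the spanning elements of $X_1$ and $X_2$ and verify that $\dim X\leq 4$, giving $|M(G)|\geq p^{10}$. The main difficulty will be the systematic enumeration in $\Phi_2$, where various subcases arise depending on which generators lift to order $p^2$ and where their $p$-th powers lie in $G'$; the individual Blackburn-Evens computation is then routine linear algebra over $\GF(p)$, but one must be careful to cover every candidate in James's table and not miss a combination in which $X$ happens to fill $V\otimes W$.
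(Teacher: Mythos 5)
Your proposal is correct, and it reaches the paper's list by a genuinely different route in the two places where real work is needed. The setup coincides with the paper: reduction to $\Phi_2\cup\Phi_5$ from $|G'|=p$, the appeal to the preceding lemma to force $G/G'$ elementary abelian, and the K\"unneth-type formula $M(H\times A)\cong M(H)\times M(A)\times (H^{\mathrm{ab}}\otimes A)$ for the four direct-product groups (which the paper dismisses as ``easy to see''). The divergence is at $\Phi_5(21^4)b$: the paper sandwiches its multiplier, getting $|M(G)|\le p^9$ from Theorem~\ref{J} with $K=Z(G)$ and $|M(G)|\ge p^9$ from \cite[Corollary 3.2]{MRRR}, whereas you run Blackburn--Evens (Theorem~\ref{B}) with $\dim V=5$, $\dim W=1$ to get the identity $|M(G)|=p^{14-\dim X}$; your verification that the Jacobi elements place every $\bar\alpha_i\otimes\beta$ in $X_1$ and that $f(\bar\gamma)=\beta$ places $\bar\gamma\otimes\beta$ in $X_2$ is correct (in fact the triple $(\bar\alpha_1,\bar\alpha_2,\bar\gamma)$ already puts $\bar\gamma\otimes\beta$ in $X_1$), so $X=V\otimes W$ and $|M(G)|=p^9$. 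Your formulation buys two things the paper's does not: the lower bound $|M(G)|\ge p^9$ comes for free for every group under consideration, and each case reduces to checking whether five explicit vectors span the $5$-dimensional space $V\otimes W$. One caution on your closing step: you speak of ruling out ``every other group of order $p^6$ in $\Phi_2\cup\Phi_5$'', but Theorem~\ref{B} applies only when $G/G'$ is elementary abelian, so you must first discard (by the preceding lemma, exactly as your opening paragraph licenses) all members of these families whose abelianization is not elementary abelian; after that the only survivor to exclude is $\Phi_2(1^6)\cong ES_p(p^3)\times\mathbb{Z}_p^{(3)}$, whose multiplier has order $p^{11}$ by the same K\"unneth formula. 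With that reading there is no gap, and your argument is arguably more uniform and more checkable than the paper's.
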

\begin{proof}
It follows from \cite{RJ} that $G$ lie in the isoclinism classes $\Phi_2$ or $\Phi_5$. By the preceeding lemma we have $G/G'$ is elementary abelian of order $p^5$. So in the isoclinism class $\Phi_2$ we have to check $\Phi_2(21^4)a, \Phi_2(21^4)b, \Phi_2(21^4)c, \Phi_2(21^4)d, \Phi_2(1^6)$ and in the isoclinism class $\Phi_5$ we have to check $\Phi_5(21^4)a,  \Phi_5(1^6), \Phi_5(21^4)b$.

If $G \cong \Phi_5(21^4)b$, then by Theorem \ref{J} (taking $K=Z(G)$) we have $|M(G)| \leq p^9$ and by \cite[Corollary 3.2]{MRRR} $|M(G)| \geq p^9$. Hence $\Phi_5(21^4)b$ has Schur multiplier of order $p^9$. 

All other above groups are direct product of its subgroups. So it is easy to see that among them $\Phi_2(21^4)a, \Phi_2(21^4)b, \Phi_5(21^4)a, \Phi_5(1^6)$ have Schur multiplier of order $p^9$.  \hspace{10.3 cm}$\hfill\square$
\end{proof} 
\begin{lemma}
There is no non-abelian $p$-group of order $p^6$ with $G'\cong \mathbb{Z}_{p^2}$ and $|M(G)|=p^9$.
\end{lemma}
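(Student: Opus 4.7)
The plan is to rule out $|M(G)|=p^9$ by analysing the structure of $Q:=G/\gamma_3(G)$. First I would show $G$ has nilpotency class exactly $3$. Since $\gamma_{i+1}(G)\subsetneq\gamma_i(G)$ whenever $\gamma_i(G)\neq 1$, the constraint $|G'|=p^2$ leaves only class $2$ (with $\gamma_3(G)=1$) or class $3$ with $|\gamma_3(G)|=p$. The preceding lemma gives $G/G'\cong\mathbb{Z}_p^{(4)}$, so in the class-$2$ case $G/Z(G)$ would be elementary abelian, hence $g^p\in Z(G)$ for every $g\in G$; the class-$2$ commutator identity $[x,y]^p=[x,y^p]=1$ would then force $G'$ to have exponent $p$, contradicting $G'\cong\mathbb{Z}_{p^2}$. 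Thus $G$ has class exactly $3$ and $\gamma_3(G)$ is central of order $p$.

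Next I would set $K=\gamma_3(G)$ and apply Theorem \ref{J}, which yields $|M(G)|\le p^{3}|M(Q)|$ since $|G'\cap K|=p$, $|M(K)|=1$ and $|(G/K)\otimes K|=|\mathbb{Z}_p^{(4)}\otimes\mathbb{Z}_p|=p^{4}$. The quotient $Q$ is a class-$2$ group of order $p^{5}$ with $Q^{ab}\cong\mathbb{Z}_p^{(4)}$ and $Q'\cong\mathbb{Z}_p$, so Theorem \ref{B} applies with $V=Q^{ab}$ and $W=Q'$, giving $|M(Q)|=|V\otimes W/X|\cdot|\ker\rho|$, where $|\ker\rho|=p^{5}$ and $|V\otimes W|=p^{4}$. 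A short verification then produces three sub-cases: (i) if the Frobenius $f:V\to W$ is non-zero then $X_{2}=V\otimes W$ (because $W$ is one-dimensional), so $|M(Q)|=p^{5}$; (ii) if $f=0$ and the commutator form on $V$ has rank $4$, then $Q\cong ES_p(p^{5})$ and $|M(Q)|=p^{5}$ by Theorem \ref{SHH}; (iii) if $f=0$ and the form has rank $2$, then $Q\cong ES_p(p^{3})\times\mathbb{Z}_p^{(2)}$ and $|M(Q)|=p^{7}$.

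In sub-cases (i) and (ii), Theorem \ref{J} already yields $|M(G)|\le p^{3}\cdot p^{5}=p^{8}<p^{9}$. The genuine obstacle is sub-case (iii), where Theorem \ref{J} only gives $|M(G)|\le p^{10}$; there I would switch to Proposition \ref{J1} with $c=3$. Since $|Z(Q)|=p^{3}$, the preimage of $Z(Q)$ in $G$ has order $p^{4}$ and lies in $Z_{2}(G)$, so $|G/Z_{2}(G)|\le p^{2}$; class $3$ excludes $|G/Z_{2}(G)|=p$ (otherwise $G/Z(G)$ would have cyclic quotient by its centre, hence be abelian, forcing $G$ to have class $\le 2$), so $|G/Z_{2}(G)|=p^{2}$. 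Because $G/Z_{2}(G)$ is elementary abelian (quotient of $G/G'$), $|G/Z_{2}(G)\otimes\gamma_{3}(G)|=p^{2}$, and Proposition \ref{J1} gives $p\cdot|M(G)|\le p^{7}\cdot p^{2}=p^{9}$, i.e.\ $|M(G)|\le p^{8}<p^{9}$. Converting the rank-$2$ hypothesis into a large enough second centre to invoke Proposition \ref{J1} is the technical heart of the argument.
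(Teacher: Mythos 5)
Your argument is correct, but it takes a genuinely different route from the paper. The paper disposes of this case in one line: by James's tables, a group of order $p^6$ with $G'\cong\mathbb{Z}_{p^2}$ lies in the isoclinism class $\Phi_8$ or $\Phi_{14}$, and Theorem \ref{J} with $K=Z(G)$ then gives $|M(G)|<p^9$. You avoid the classification altogether: using the earlier lemma that $G/G'$ is elementary abelian you force $G$ to have class exactly $3$, pass to $Q=G/\gamma_3(G)$, compute $|M(Q)|$ via the Blackburn--Evens description (Theorem \ref{B}) according to whether the power map $f$ vanishes and what the rank of the commutator form is, and close the one recalcitrant case $Q\cong ES_p(p^3)\times\mathbb{Z}_p^{(2)}$ with Proposition \ref{J1}, using $|Z(Q)|=p^3$ to force $|G/Z_2(G)|=p^2$. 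The individual steps check out: the reading of $A\otimes K$ as $A^{ab}\otimes K$ in Theorem \ref{J} matches the paper's own usage, and the values $|M(Q)|\in\{p^5,p^5,p^7\}$ in your three sub-cases are right. What your route buys is independence from James's tables and a visible structural reason for the bound; what it costs is length, plus one implicit restriction worth flagging: the linearity of $f$, which underlies the claim $X_2=V\otimes W$ in sub-case (i), requires $p$ odd. That is harmless in context, since the paper's own proof also rests on the odd-$p$ classification and the case $p=2$ is settled by machine computation in the proof of the Main Theorem. A small streamlining is available: since $G'\le Z_2(G)$ gives $|G/Z_2(G)\otimes\gamma_3(G)|\le p^4$ in sub-cases (i) and (ii), Proposition \ref{J1} alone already yields $|M(G)|\le p^8$ in every sub-case, so the initial application of Theorem \ref{J} with $K=\gamma_3(G)$ can be dropped.
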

\begin{proof}
From \cite{RJ} it suffices to consider isoclinism classes $\Phi_8, \Phi_{14}$. Now using Theorem \ref{J} (taking $K=Z(G)$) we can easily show that $|M(G)| < p^9$. \hspace{2 cm}$\hfill\square$
\end{proof}
\begin{lemma}
Let $G$ be a non-abelian $p$-group of order $p^6$ with $G' \cong  \mathbb{Z}_p \times  \mathbb{Z}_p $ and $|M(G)|=p^9$. Then $Z(G)$ is of exponent $p$ and $G'\subseteq Z(G)$.
\end{lemma}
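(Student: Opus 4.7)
The plan is to establish the two conclusions separately, in each case selecting a central subgroup $K$ and applying Theorem~\ref{J} or Proposition~\ref{J1} to force an upper bound on $|M(G)|$ strictly below $p^9$. Throughout I use the preceding lemma, giving $G/G' \cong \mathbb{Z}_p^{(4)}$.

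To show $G' \subseteq Z(G)$, suppose for contradiction that $G$ has nilpotency class $3$. Since $[G,G'] = \gamma_3(G)$ is proper in the order-$p^2$ group $G'$, necessarily $|\gamma_3(G)| = p$ and $\gamma_3(G) = G' \cap Z(G)$. Setting $K = \gamma_3(G)$ in Proposition~\ref{J1} with $c = 3$, I would observe that $G/Z_2(G)$ is a quotient of $G/G'$, hence elementary abelian, so $|(G/Z_2(G)) \otimes \gamma_3(G)| = |G/Z_2(G)|$. Writing $H = G/Z(G)$, the commutator pairing $H/Z(H) \times H/Z(H) \to H' \cong \mathbb{Z}_p$ is a non-degenerate alternating form; hence $|G/Z_2(G)| = |H/Z(H)| = p^{2m}$ with $2m \in \{2, 4\}$, and Proposition~\ref{J1} yields $|M(G)| \leq p^{2m-1} |M(G/\gamma_3(G))|$. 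For $2m = 2$, the Blackburn--Evens construction (Theorem~\ref{B}) applied to the class $2$ group $G/\gamma_3(G)$ of order $p^5$ with abelianisation $\mathbb{Z}_p^{(4)}$ and derived subgroup $\mathbb{Z}_p$ gives $|M(G/\gamma_3(G))| \leq p^7$, yielding $|M(G)| \leq p^8$. For $2m = 4$, the size constraint forces $|Z(G)| = p$ and $H = G/\gamma_3(G)$ extra-special of order $p^5$, so Theorem~\ref{SHH} gives $|M(G/\gamma_3(G))| = p^5$ and again $|M(G)| \leq p^8$. Both cases contradict $|M(G)| = p^9$.

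For the exponent claim, suppose $z \in Z(G)$ has order $p^2$. Since $G/G'$ is elementary abelian, $z^p \in G'$; and since $G' \cong \mathbb{Z}_p^{(2)}$ is non-cyclic, $z \notin G'$. Consequently $K := \langle z \rangle \cong \mathbb{Z}_{p^2}$ is central with $|G' \cap K| = p$ and $|G'K| = p^3$. Applying Theorem~\ref{J}: $|M(K)| = 1$; $(G/K)^{\mathrm{ab}} = G/(G'K)$ is elementary abelian of order $p^3$, so $|(G/K) \otimes K| = |\mathbb{Z}_p^{(3)} \otimes \mathbb{Z}_{p^2}| = p^3$; and $G/K$ is non-abelian of order $p^4$ with $|(G/K)'| = p$, so Theorem~\ref{SHHH} gives $|M(G/K)| \leq p^4$. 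Theorem~\ref{J} then yields $p \cdot |M(G)| \leq p^4 \cdot p^3$, i.e.\ $|M(G)| \leq p^6 < p^9$, a contradiction.

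The main obstacle is the sub-case $2m = 2$ in the class argument: one needs the sharp uniform bound $|M(X)| \leq p^7$ across every class $2$ group $X$ of order $p^5$ with $X^{\mathrm{ab}}$ elementary abelian of rank $4$ and $|X'| = p$, which requires a careful computation of the Blackburn--Evens subspace $X_1 \subseteq V \otimes W$ using that the commutator form on a rank-$4$ space takes values in a $1$-dimensional space. The remaining steps reduce to direct substitutions into Theorem~\ref{J} or Proposition~\ref{J1} once the relevant central subgroup is chosen.
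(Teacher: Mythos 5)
Your proof is correct, but the argument for $G' \subseteq Z(G)$ takes a genuinely different route from the paper's. The paper assumes $G' \nsubseteq Z(G)$, splits into the cases $|Z(G)| = p, p^2, p^3$, and in each case feeds a suitable central subgroup $K$ with $G' \cap K = 1$ (or the class-$3$ structure) into Theorem~\ref{J} or Proposition~\ref{J1}, invoking \cite{PN,PN3} and Theorem~\ref{SHHH} to bound $|M(G/K)|$; the case $|Z(G)| = p^2$ in particular leans on the classification of order-$p^5$ groups with large multiplier. You instead apply Proposition~\ref{J1} once with $\gamma_3(G)$ and use the non-degenerate alternating commutator form on $G/Z_2(G)$ to force $|G/Z_2(G)| \in \{p^2, p^4\}$, reducing everything to a bound on $|M(G/\gamma_3(G))|$ --- a cleaner and more self-contained case division. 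One remark: the ``main obstacle'' you flag in the sub-case $|G/Z_2(G)| = p^2$ is not actually an obstacle, since the uniform bound $|M(X)| \leq p^7$ for non-abelian $X$ of order $p^5$ with $|X'| = p$ is exactly the main theorem of \cite{PN} (take $n=5$, $k=1$ in $|M(X)| \leq p^{\frac{1}{2}(n+k-2)(n-k-1)+1}$), which the paper already cites and uses elsewhere, so no Blackburn--Evens computation is required. Your exponent argument is essentially the paper's (a cyclic central subgroup of order $p^2$ fed into Theorem~\ref{J}), just made explicit via $|M(K)| = 1$, $|(G/K) \otimes K| = p^3$ and Theorem~\ref{SHHH} in place of the general bound from \cite{PN}; only the justification of $z \notin G'$ should read ``$G'$ has exponent $p$'' rather than ``$G'$ is non-cyclic''.
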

\begin{proof}
Let the exponent of $Z(G)$ be $p^k(k \geq 2)$ and $K$ be a cyclic central subgroup  of order $p^k$. Then using Theorem \ref{J} and \cite{PN}, we have\\
\centerline{$|M(G)| \leq p^{-1}|M(G/K)||G/K \otimes K| \leq p^{-1}p^{\frac{1}{2}(n-3)(n-4)+1}p^{(n-3)}$}. \\
When $n=6$ it gives $|M(G)| < p^9$, which is a contradiction. Hence $Z(G)$ is of exponent $p$. 

Now assume that $G' \nsubseteq Z(G)$. Then it follows that $|Z(G)| \leq p^3$.
If $|Z(G)|=p$, then $G$ is of nilpotency class 3. Now by Proposition \ref{J1}, we get $|M(G)| < p^9$. If $|Z(G)|=p^2$, then by the assumption there is a central subgroup $K$ of order $p$ such that $G' \cap K=1$ and $(G/K)'=p^2$. By Theorem \ref{J}, $|M(G)| \leq |M(G/K)|p^3$. This is possible only when $|M(G/K)| \geq p^6$. Now by Theorem \cite{PN, PN3} we see that there is no such $G/K$ of order $p^5$ such that $|M(G/K)| \geq p^6$ with $(G/K)'=p^2$ and $Z(G/K)=p$. Finally if $|Z(G)|=p^3$, then consider a central subgroup of order $p^2$ such that $G' \cap K=1$. Then $|M(G)| \leq |M(G/K)|p^4$ by Theorem \ref{J}. Therefore it follows from Theorem \ref{SHHH} $|M(G)| < p^9$. \hspace{5cm} $\hfill\square$
\end{proof} 
\begin{lemma}\label{4}
Let $G$ be a non-abelian $p$-group of order $p^6$ with $G'\cong  \mathbb{Z}_p \times  \mathbb{Z}_p$ and $|M(G)|=p^{\frac{1}{2}6(6-1)-6}=p^9$. Then $G$ is isomorphic to $\Phi_4(1^6)$.
\end{lemma}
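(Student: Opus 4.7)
By the preceding lemmas, $G$ has nilpotency class $2$, with $G' \cong \mathbb{Z}_p \times \mathbb{Z}_p$ contained in $Z(G)$, the centre $Z(G)$ of exponent $p$, and $V := G/G'$ elementary abelian of rank $4$. By James' classification \cite{RJ} these conditions force $G$ to lie in the isoclinism class $\Phi_4$, so the candidates of order $p^6$ are $\Phi_4(1^6) = \Phi_4(1^5) \times \mathbb{Z}_p$ together with a short list of other groups in $\Phi_4$ whose abelianisation is elementary abelian but whose $p$-power map $V \to W := G'$ is non-trivial.

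To see that $\Phi_4(1^6)$ actually realises $|M(G)| = p^9$, I would apply the formula $M(A \times B) \cong M(A) \oplus M(B) \oplus (A^{ab} \otimes B^{ab})$ to the decomposition $\Phi_4(1^6) = \Phi_4(1^5) \times \mathbb{Z}_p$. From the proof of Lemma \ref{2} we have $|M(\Phi_4(1^5))| = p^6$, and since $\Phi_4(1^5)^{ab} \cong \mathbb{Z}_p^{(3)}$, we obtain $|M(\Phi_4(1^6))| = p^6 \cdot 1 \cdot p^3 = p^9$, as required.

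For the remaining candidates I would apply the Blackburn--Evens machinery (Theorem \ref{B}). Since $\dim V = 4$, $\dim W = 2$, and the commutator map $\rho \colon V \wedge V \to W$ is surjective with $4$-dimensional kernel, the formula gives $|M(G)| = p^{12 - \dim X}$, where $X = X_1 + X_2 \subseteq V \otimes W$. The subspace $X_1$ is determined by the commutator structure, which is common to the whole isoclinism class; the computation for $\Phi_4(1^6)$ (in which $f = 0$ because the group has exponent $p$) pins down $\dim X_1 = 3$. For every other group in $\Phi_4$ of order $p^6$ the $p$-power map $f$ is non-zero, and the main obstacle is to verify that the additional generators $v \otimes f(v)$ of $X_2$ enlarge $X$ strictly, so that $\dim X \geq 4$ and hence $|M(G)| \leq p^8 < p^9$. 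This comes down to a finite inspection using James' presentations, choosing a basis of $V$ on which $f$ is explicit. An alternative route, parallel to the arguments used in Lemma \ref{2} for $\Phi_3(2111)c$ and the various $\Phi_7$ groups, is to choose for each such $G$ a cyclic central subgroup $K$ of order $p$ with $G' \cap K = 1$ whose quotient $G/K$ is of order $p^5$ with $|M(G/K)|$ already known, and then invoke Theorem \ref{J} to deduce $|M(G)| < p^9$.
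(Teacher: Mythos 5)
There is a genuine gap at your very first step. The preceding lemmas do give class $2$, $G'\cong\mathbb{Z}_p\times\mathbb{Z}_p$ contained in $Z(G)$ of exponent $p$, and $G/G'$ elementary abelian of rank $4$, but these conditions do \emph{not} force $G$ into the isoclinism family $\Phi_4$. For instance $ES_p(p^3)\times ES_p(p^3)$ satisfies every one of them, yet it is not isoclinic to $\Phi_4(1^6)$: its central quotient has rank $4$, whereas the $\Phi_4$ groups of order $p^6$ have $|G/Z(G)|=p^3$. The same applies to the other class-$2$ families of order $p^6$ with $G'=Z(G)\cong\mathbb{Z}_p^{(2)}$ (the semi-extraspecial type families in James' tables). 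Your argument never examines these groups, so even though the final answer survives (one checks, e.g., $|M(ES_p(p^3)\times ES_p(p^3))|=p^8$), the proof as written is incomplete. The paper closes exactly this hole by a different device: for any central $K\le G'$ of order $p$, Theorem \ref{J} gives $|M(G)|\,p\le |M(G/K)|\,p^4$, hence $|M(G/K)|\ge p^6$, and Niroomand's results then force $G/K\cong ES_p(p^3)\times\mathbb{Z}_p^{(2)}$; since this holds for every such $K$ and $G'$ contains more than one subgroup of order $p$, it follows that $G$ has exponent $p$. Only after this reduction does the paper run through the surviving exponent-$p$ candidates ($\Phi_3(1^6),\Phi_4(1^6),\Phi_7(1^6),\Phi_{22}(1^6)$), eliminating $\Phi_{22}(1^6)$ via \cite[Theorem 3.1]{MRR} and the remaining direct products by the K\"unneth-type formula. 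This quotient argument is the key idea missing from your proposal.

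The parts of your proposal that are carried out are fine: the computation $|M(\Phi_4(1^6))|=|M(\Phi_4(1^5))|\cdot|\mathbb{Z}_p^{(3)}\otimes\mathbb{Z}_p|=p^9$ agrees with the paper, and the Blackburn--Evens count $|M(G)|=p^{12-\dim X}$ with $\dim X_1=3$ for the family $\Phi_4$ checks out. But even within $\Phi_4$ you leave the decisive step --- that $v\otimes f(v)\notin X_1$ for some $v$ whenever the power map $f$ is nonzero, so that $\dim X\ge 4$ --- as an unexecuted ``finite inspection,'' and the alternative route via Theorem \ref{J} is likewise only gestured at. So the candidate list is both too short and not fully disposed of.
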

\begin{proof}
By preceeding lemma, $G' \subseteq Z(G)$. 
Now consider a central subgroup $K$ in $G'$ of order $p$. By Theorem \ref{J} we have $|M(G)| p \leq |M(G/K)|p^4$. This is possible only when $|M(G/K)| \geq p^6$. Since $(G/K)' \cong \mathbb{Z}_p$ so $G/K \cong ES_p(p^3) \times \mathbb{Z}_p^{(2)}$ by \cite{PN}.
This tells that $G$ is of exponent $p$. So we have to study the groups of exponent $p$ in the isoclinism classes $\Phi_3, \Phi_4, \Phi_7, \Phi_{22}$ which are $\Phi_3(1^6), \Phi_4(1^6), \Phi_7(1^6), \Phi_{22}(1^6)$.

If $G \cong \Phi_{22}(1^6)$, then by \cite[Theorem 3.1]{MRR} (taking $K=<\alpha,\alpha_1,\alpha_2, \alpha_3,\beta_1>$) we have $|M(G)| < p^9$. Other groups are direct product of its subgroups. Hence it is easy to see that only $\Phi_4(1^6)$ has Schur multiplier of order $p^9$.  \hspace{3 cm}$\hfill\square$
\end{proof} 

We are now ready to prove our Main Theorem.
\subsection{Proof of Main Theorem}
Let $G$ be a group of order $p^n$ ($p$ odd) with $|M(G)|=p^{\frac{1}{2}n(n-1)-6}$. By Lemma \ref{5}, it follows that $n \leq 8$. If $|G|=p^8$, then the assertion $(i)$  of the main theorem follows from Lemma \ref{6}. By Lemma \ref{7} it follows that there is no non-abelian $p$-group of order $p^7$ with $|M(G)|=p^{\frac{1}{2}7(7-1)-6}=p^{15}$. So the problem reduces to studying groups of order $p^4$, $p^5$ and $p^6$.
If $|G|=p^4$, then the assertion $(xii)$ follows from Lemma \ref{8}.
Now we assume $|G|=p^5$. Then assertions $(viii), (ix)$ follows from Lemma \ref{1} and $(x), (xi)$  follows from Lemma \ref{2}. Now consider groups of order $p^6$. Then the assertions $(ii), (iii), (iv), (v), (vi)$ follow from Lemma \ref{3} and  $(vii)$ follows from \ref{4}.

For the case $p=2$, the classification follows from computation using HAP package \cite{HAP} of GAP \cite{GAP}.   $\hfill\square$
\\
\\
{\bf Acknowledgement}:
I am grateful to my supervisor Manoj K. Yadav for his guidance, motivation and discussions.  I wish to thank the Harish-Chandra Research Institute, the Dept. of Atomic Energy, Govt. of India, for providing excellent research facility.

\end{document}